\renewcommand{\geq}{\geqslant}
\renewcommand{\leq}{\leqslant}
\newtheorem{theorem}{Theorem}[section]
\newtheorem{lemma}[theorem]{Lemma}
\newtheorem*{main-theorem}{Main Theorem}
\newtheorem*{remark*}{Remark}
\numberwithin{equation}{section}
\title[Wave breaking in a shallow water model]
{Wave breaking in a shallow water model}
\author[Hur]{Vera~Mikyoung~Hur}
\address{Department of Mathematics, University of Illinois at Urbana-Champaign, Urbana, IL 61801 USA}
\email{verahur@math.uiuc.edu}
\author[Tao]{Lizheng~Tao}
\address{Department of Mathematics, University of California, Riverside, CA 92521 USA}
\email{ltao@math.ucr.edu}  
\date{\today}
\keywords{blow-up; wave breaking; Whitham; Boussinesq; shallow water}
\subjclass[2010]{35A20, 35B44, 35S10, 35F25, 76B15}
\begin{document}

\maketitle

\begin{abstract}
We prove wave breaking --- bounded solutions with unbounded derivatives --- in the nonlinear nonlocal equations  
which combine the dispersion relation of water waves and the nonlinear shallow water equations, 
and which generalize the Whitham equation to permit bidirectional wave propagation, 
provided that the slope of the initial data is sufficiently negative.
\end{abstract}

\section{Introduction}\label{sec:intro}
As Whitham~\cite{Whitham} emphasized, ``the breaking phenomenon is one of the most intriguing long-standing problems of water wave theory." The {\em nonlinear shallow water equations}:
\begin{equation}\label{E:shallow}
\begin{aligned}
&\partial_t\eta+\partial_x(u(1+a\eta))=0,\\
&\partial_tu+\partial_x\eta+a\,u\partial_xu=0,
\end{aligned}
\end{equation}
approximate the physical problem when the order of the characteristic wavelengh is greater than the undisturbed fluid depth, and they explain {\em wave breaking}. That is, the solution remains bounded but its slope becomes unbounded in finite time. Here $t\in\mathbb{R}$ is proportional to elapsed time, and $x\in\mathbb{R}$ is the spatial variable in the primary direction of wave propagation; $\eta=\eta(x,t)$ represents the free surface displacement from the depth $=1$, and $u=u(x,t)$ is the particle velocity at the rigid horizontal bottom; $a>0$ is the dimensionless nonlinearity parameter. See \cite{Lannes}, for instance, for details. We assume for simplicity that the constant due to gravitational acceleration is $1$.  Note that the phase speed associated with the linear part of \eqref{E:shallow} is independent of the spatial frequency, whereas the speed of a plane wave with the spatial frequency $\xi$ near the quiescent state of water\footnote{The derivation of \eqref{def:cWW} dates back to the work of Airy in 1845!} is 
\begin{equation}\label{def:cWW}
c_{WW}^2(\xi)=\frac{\tanh(\xi)}{\xi}.
\end{equation}
In other words, \eqref{E:shallow} neglects the dispersion effects of the physical problem. 

But the shallow water theory goes too far. It predicts that {\em all} solutions carrying an increase of elevation break. Yet observations have long been established that some waves in water do {\em not} break. Perhaps, the neglected dispersion effects inhibit breaking. 

But, including some\footnote{In the long wave limit as $\xi\to0$, one may expand the right side of \eqref{def:cWW} and find that 
\[
c_{WW}(\xi)=\Big(1-\frac16\xi^2\Big)+O(\xi^4).
\]
} dispersion effects, the {\em Korteweg-de Vries (KdV) equation}:
\begin{equation}\label{E:KdV}
\partial_t\eta+\Big(1+\frac16a\partial_x^2\Big)\partial_x\eta+\frac32a\,\eta\partial_x\eta=0,
\end{equation}
in turn, goes too far and predicts that {\em no} solutions break. 
To conclude, one needs some dispersion effects to satisfactorily explain breaking, but the dispersion of the KdV equation seems too strong. This is not surprising because the phase speed~$=1-\frac16a\xi^2$ associated with the linear part of \eqref{E:KdV} poorly approximates\footnote{A relative error of $10\%$, say, between $c_{WW}(\sqrt{a}\xi)$ and the phase speed for the KdV equation is made for $\sqrt{a}\xi>1.242\dots$.} that of water waves (see \eqref{def:cWW}) when $\xi$ is large. 

Whitham therefore noted that ``it is intriguing to know what kind of simpler mathematical equation (than the governing equations of the water wave problem) could include" the breaking effects, and he put forward (see \cite{Whitham}, for instance)
\begin{equation}\label{E:whitham}
\partial_t\eta+\mathcal{M}_{1/2}\partial_x\eta+\frac32a\,\eta\partial_x\eta=0.
\end{equation}
Here $\mathcal{M}_{1/2}$ is a Fourier multiplier operator, defined via its symbol as
\begin{equation}\label{def:M1/2}
\widehat{\mathcal{M}_{1/2}f}(\xi)=c(\xi)\widehat{f}(\xi),
\end{equation}
and $c=c_{WW}$ (see \eqref{def:cWW}). It combines the dispersion relation of the {\em unidirectional} propagation of water waves and a nonlinearity of the shallow water theory. In a small amplitude and long wavelength regime, where $a=\xi^2\ll1$, the Whitham equation agrees with the KdV equation up to the order of $a$. As a matter of fact, solutions of \eqref{E:whitham}-\eqref{def:M1/2}, where $c=c_{WW}$, and \eqref{E:KdV} exist and they converge to those of the water wave problem up to the order of $a$ during a relevant interval of time; see \cite{Lannes}, for instance, for details. Including the full range of the dispersion in water waves, on the other hand, the Whitham equation may offer an improvement over the KdV equation for short and intermediately long waves. Whitham conjectured that his equation would capture the breaking effects. 

Seliger~\cite{Seliger} made a rather ingenious argument, albeit formal, and claimed that a sufficiently asymmetric solution of \eqref{E:whitham}-\eqref{def:M1/2} breaks, provided that the Fourier transform of $c$ be even, bounded, integrable, and monotonically decay to zero at infinity. Unfortunately, it does not apply to the Whitham equation, because $c_{WW}$ is {\em not} integrable (see \eqref{def:cWW}). Later Constantin and Escher~\cite{CE98} turned Seliger's argument into a rigorous proof. Naumkin and Shishmar\"ev~\cite{NS94} made another breaking argument, provided that the Fourier transform of $c$ and its derivative be integrable and $|c(\xi)|\leq C|\xi|^{-1/3}$ for $|\xi|\gg 1$ for some $C>0$. Unfortunately, the Fourier transform of $c_{WW}$ may not be written explicitly and, hence, the assumptions in \cite{NS94} seem difficult to verify for the Whitham equation. 
While preparing the manuscript, one of the authors~\cite{Hur-breaking} solved Whitham's conjecture. 

In recent years, the Whitham equation gathered renewed attention because of its ability to explain high frequency phenomena in water waves. In particular, one of the authors~\cite{HJ2} proved that a small-amplitude, periodic traveling wave of \eqref{E:whitham}-\eqref{def:M1/2}, where $c=c_{WW}$ (see \eqref{def:cWW}), be spectrally unstable to long wavelength perturbations, provided that the wave number is greater than a critical value, and stable to square integrable perturbations otherwise. In other words, the Whitham equation captures the Benjamin-Feir instability\footnote{A periodic wave train in water is unstable to slow modulations, provided that the carrier wave number times the undisturbed fluid depth is greater than $1.363\dots$; see \cite{BF, Whitham1967}, for instance. } of Stokes waves. By the way, the Benjamin-Feir instability is a high frequency effect, which does not manifest in the KdV and nonlinear shallow water equations.
But the linear operator associated with the Whitham equation does not admit collisions of spectra away from the origin, which numerical computations (see \cite{DO}, for instance) indicate to lead to new kinds of instabilities in the physical problem. To quote Whitham, ``it is intriguing to know what kind of simpler mathematical equation could include" the breaking and other high frequency effects. 

We propose ``bidirectional Whitham" or ``Boussinesq-Whitham" equations:
\begin{equation}\label{E:BW}
\begin{aligned}
&\partial_t\eta+\partial_x(u(1+a\eta))=0,\\
&\partial_tu+\mathcal{M}\partial_x\eta+a\,u\partial_xu=0,
\end{aligned}
\end{equation}
where $\mathcal{M}$ is a Fourier multiplier operator, defined via its symbol as
\begin{equation}\label{def:M}
\widehat{\mathcal{M}f}(\xi)=c^2_{WW}(\xi)\widehat{f}(\xi)
=\frac{\tanh(\xi)}{\xi}\widehat{f}(\xi).
\end{equation}
They combine the dispersion relation of the {\em bidirectional} propagation of water waves and the nonlinear shallow water equations (see \eqref{E:shallow}). The spectrum of the linear operator associated with \eqref{E:BW} is the same as that for the physical problem. In a small amplitude and long wavelength regime, where $a=\xi^2\ll1$, moreover, they agree with a variant\footnote{They do not explicitly appear in the work of Boussinesq. But (280) in \cite{Bnesq1877}, for instance, after several ``higher order terms" drop out, becomes equivalent to \eqref{E:boussinesq}.} of the Boussinesq equations:
\begin{equation}\label{E:boussinesq}
\begin{aligned}
&\partial_t\eta+\partial_x(u(1+a\eta))=0,\\
&\Big(1-\frac13a\partial_x^2\Big)\partial_tu+\partial_x\eta+a\,u\partial_xu=0,
\end{aligned}
\end{equation}
up to the order of $a$, like the Whitham equation does with the KdV equation. 
As a matter of fact, one may modify the argument in \cite{Lannes}, for instance, to verify that solutions of \eqref{E:BW}-\eqref{def:M} and \eqref{E:boussinesq} exist and they converge to those of the water wave problem up to the order of $a$ during a relevant interval of time. The global-in-time well-posedness for \eqref{E:boussinesq} was established in \cite{Schonbek} and \cite{Amick}, for instance. Including the {\em full dispersion} in water waves, on the other hand, \eqref{E:BW}-\eqref{def:M} may capture the breaking effects. This is the subject of investigation here. The Benjamin-Feir instability and other high frequency effects for \eqref{E:BW}-\eqref{def:M} were studied in \cite{HP2}.

If we furthermore assume that $\eta$ is much smaller than the fluid depth $=1$ then we may reject terms of the order $u\eta$ in the former equation of \eqref{E:BW} with respect to terms of the order $u$ and, after suppressing $a$, we arrive at 
\begin{equation}\label{E:main}
\begin{aligned}
&\partial_t\eta+\partial_xu+u\partial_x\eta=0,\\
&\partial_tu+\mathcal{M}\partial_x\eta+u\partial_xu=0.
\end{aligned}
\end{equation}
(Although we reject $\eta$ with respect to $1$, we must not $u\partial_x\eta$ since it is a priori not smaller than terms in the latter equation of \eqref{E:BW}.) The main result asserts the wave breaking in \eqref{E:main} and \eqref{def:M}, provided that the slope of the initial velocity is sufficiently negative. Note that 
the integral representation of $\mathcal{M}\partial_x$ may be written explicitly. Specifically,
\begin{equation} \label{def:K}
\mathcal{M}\partial_x f(x)=-\frac12{\text PV}\int_{-\infty}^\infty \frac {f(y)}{\sinh(\frac\pi2(x-y))}~dy,
\end{equation}
where $PV$ stands for the Cauchy principal value. 

\begin{theorem}[Wave breaking in \eqref{E:main}-\eqref{def:K}]\label{thm:main}
Assume that $\eta_0, u_0\in H^\infty(\mathbb{R})$. For $\epsilon>0$ sufficiently small, 
assume that 
\begin{align}
\|u_0^{(n)}\|_{L^\infty(\mathbb{R})}<&n^{(n-1)/\alpha+1}b^{n-1}, &&n=2,3,\dots,\label{A:un} \\
\|\eta_0\|_{L^\infty(\mathbb{R})}<&\frac{1}{2\epsilon}, && \label{A:h0}\\
\|\eta_0^{(n)}\|_{L^\infty(\mathbb{R})}<&\frac1\epsilon n^{n/\alpha}b^{n-1}, &&n=1,2,\dots\label{A:hn}
\end{align}
for some $b\geq 1$ and for some $\alpha$ such that $\frac12(1+\epsilon)<\alpha<\frac23(1-14\epsilon)$. Moreover, assume that 
\begin{align}
\epsilon^2(-\inf_{x\in\mathbb{R}}u_0'(x))^2>&1+\|\eta_0\|_{H^2(\mathbb{R})},\label{A:m1}\\
\epsilon\Big(\frac{1-\epsilon}{1+\epsilon}\Big)^2(-\inf_{x\in\mathbb{R}}u_0'(x))^{1/4}>&
\frac{4e}{2^{1/\alpha-1}-1}, \label{A:m2} \\
\epsilon^5(1-\epsilon)^4(-\inf_{x\in\mathbb{R}}u_0'(x))^{3/4}>&
\frac{80}{\pi}(1+(2e)^{1/\alpha}b).\label{A:m3}
\end{align}
Then the solution of \eqref{E:main}-\eqref{def:K} and 
\[
\eta(x,0)=\eta_0(x), \qquad u(x,0)=u_0(x),
\]
exhibits wave breaking. Specifically,
\[ 
|u(x,t)|<\infty \qquad \text{for all $x\in\mathbb{R}$}\quad\text{for all $t\in [0,T)$}
\]
but
\[
\inf_{x\in\mathbb{R}}\partial_xu(x,t) \to -\infty \qquad\text{as $t\to T-$}
\] 
for some $T>0$. Moreover,
\begin{equation}\label{E:T}
\frac{1}{1+\epsilon}\frac{1}{-\inf_{x\in\mathbb{R}}u_0'(x)}<T<
\frac{1}{(1-\epsilon)^2}\frac{1}{-\inf_{x\in\mathbb{R}}u_0'(x)}.
\end{equation} 
\end{theorem}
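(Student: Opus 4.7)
My plan is to follow $V(t) := \partial_x u(X(t), t)$ along the characteristic $\dot X(t) = u(X(t), t)$ issuing from a point $x_0$ at which $u_0'$ attains (or essentially attains) its infimum. Differentiating the second equation of \eqref{E:main} in $x$ and writing everything in terms of the material derivative $D_t = \partial_t + u\partial_x$ produces the Riccati-type identity
\[
\dot V(t) = -V(t)^2 - (\mathcal{M}\partial_x^2\eta)(X(t), t).
\]
If the nonlocal forcing $F(t) := (\mathcal{M}\partial_x^2\eta)(X(t), t)$ can be bounded by a constant $C$ with $V(0)^2 \gg C$ on a time interval of length of order $1/|V(0)|$, then a standard Riccati comparison forces $V(t) \to -\infty$ at some time close to $1/|V(0)| = 1/(-\inf u_0')$. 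The three thresholds \eqref{A:m1}--\eqref{A:m3} are exactly the quantitative forms of $V(0)^2 \gg C$ (plus a companion domination controlling the drift of the characteristic), and the comparison then yields the two-sided interval \eqref{E:T}. Pointwise boundedness of $u$ on $[0, T)$ follows from $D_t u = -\mathcal{M}\partial_x\eta$ together with an $L^\infty$ bound on $\mathcal{M}\partial_x\eta$ obtained from the kernel representation \eqref{def:K}.

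To bound the forcing $F(t)$, I would apply the kernel representation \eqref{def:K} to $\eta'(\cdot, t)$, split the resulting principal-value integral at a scale $\delta > 0$, use the oddness of $1/\sinh$ together with a high-order Taylor expansion of $\eta'$ about $x$ in the near field, and the exponential decay of $1/\sinh$ in the far field. This yields a schematic bound
\[
|\mathcal{M}\partial_x^2\eta(x, t)| \lesssim \|\eta'(\cdot, t)\|_{L^\infty} + \sum_{N\geq 1} c_N\,\delta^N\,\|\eta^{(N+1)}(\cdot, t)\|_{L^\infty},
\]
which, granted Gevrey-type bounds like \eqref{A:hn}, converges to an absolute constant once $\delta$ is chosen small. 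The upper bound $\alpha < \tfrac23(1 - 14\epsilon)$ is the precise quantitative statement making this series summable with a constant compatible with \eqref{A:m1}--\eqref{A:m3}.

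The principal obstacle is to \emph{propagate} the Gevrey bounds \eqref{A:un}--\eqref{A:hn} (up to slightly enlarged multiplicative constants) from $t = 0$ to the breaking time $T$. I would carry this out by strong induction on $n$ using the derivative equations
\[
D_t\eta^{(n)} = -u^{(n+1)} - \sum_{k=1}^{n}\binom{n}{k} u^{(k)}\eta^{(n-k+1)}, \qquad D_t u^{(n)} = -\mathcal{M}\partial_x\eta^{(n)} - \sum_{k=1}^{n}\binom{n}{k} u^{(k)} u^{(n-k+1)},
\]
together with Stirling-type estimates for the Leibniz sums of the form $\sum_{k}\binom{n}{k} k^{k/\alpha}(n-k+1)^{(n-k+1)/\alpha}$, which stay bounded by a constant multiple of $(n+1)^{(n+1)/\alpha}$ precisely when $\alpha > 1/2$; the lower bound $\alpha > \tfrac12(1+\epsilon)$ produces that constant with enough margin to absorb back into the induction. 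The nonlocal term $\mathcal{M}\partial_x\eta^{(n)}$ is handled by reusing the near/far-field decomposition of the previous paragraph. Once the induction closes on $[0, T)$, the uniform bound on $F$ is in hand, and the Riccati comparison of the first paragraph yields both the wave-breaking conclusion and \eqref{E:T}.
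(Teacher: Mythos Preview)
Your overall architecture is right (Riccati ODE for $v_1$ along characteristics, kernel splitting for $K_n$, Leibniz/Stirling control of the bilinear sums, and $\alpha>1/2$ for summability), but there is a genuine gap in the step where you ``propagate the Gevrey bounds \eqref{A:un}--\eqref{A:hn} up to slightly enlarged multiplicative constants.'' Those bounds do \emph{not} propagate uniformly in time. From $\tfrac{d}{dt}\zeta_n=-n v_1\zeta_n+\cdots$ the leading term is $-n v_1\zeta_n$, and at points where $v_1$ is close to $m(t)=m(0)q^{-1}(t)<0$ this drives $|\zeta_n|$ upward like $q(t)^{-c n}$; the paper in fact shows the sharp form
\[
\|\zeta_n(t)\|_{L^\infty}\lesssim q(t)^{-\epsilon-n\sigma},\qquad
\|v_n(t)\|_{L^\infty}\lesssim q(t)^{-1-(n-1)\sigma},\qquad \sigma=\tfrac32+6\epsilon,
\]
and these blow up as $t\to T-$. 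Consequently your forcing $F(t)=K_1(t)$ cannot be bounded by a constant on any interval reaching the breaking time; what one must (and can) show is the relative bound $|K_1(t)|<\epsilon^2 m(t)^2$. The paper achieves this by using a \emph{time-dependent} splitting scale $\delta(t)=n^{-1/\alpha}q(t)^\sigma$ in a two-term kernel estimate $|K_n|\lesssim \delta^{-\epsilon}\|\zeta_n\|_{L^\infty}+\delta^{1-\epsilon}\|\zeta_{n+1}\|_{L^\infty}$ (no infinite Taylor sum), which turns the growing Gevrey norms into $|K_n|\lesssim q^{-1-\sigma\alpha-(n-1)\sigma}$; the constraint $\sigma\alpha<1$ (equivalently $\alpha<\tfrac23(1-14\epsilon)$) is exactly what keeps $|K_1|\ll m(t)^2$.

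A second missing ingredient is the control of $v_2$. With growth $\|v_2(t)\|_{L^\infty}\lesssim q^{-1-\sigma}$ the naive ODE $\tfrac{d}{dt}v_2=-3v_1v_2-K_2$ does not close at points where $v_1$ is only mildly negative; the paper needs a separate argument exploiting the ``smoothing'' of the characteristics, i.e.\ a priori bounds on $\partial_x^2X$ and $\partial_x^3X$ (see \eqref{claim:X2}--\eqref{claim:X3}) combined with the differentiated identities \eqref{e:v2I}--\eqref{e:v3I}, to close the $n=2$ case. Your induction sketch does not contain this mechanism and, without it, the bootstrap for $v_2$ (hence for $K_1$ via $\zeta_2$) would fail.
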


The assumptions \eqref{A:un}-\eqref{A:hn} require that $\eta_0$ and $u_0$ belong to the Gevrey class of index $1/\alpha$. Since $1/\alpha>1$, nontrivial $\eta_0$ and $u_0$ with compact support exist. They are technical assumptions and may be removed if the kernel associated with the integral representation of $\mathcal{M}$ is regular; see \cite{Seliger, CE98}, for instance. The assumptions \eqref{A:m1}-\eqref{A:m3} require that $u_0'$ be sufficiently negative somewhere in $\mathbb{R}$. The breaking scenario, we think, is that the profile of $u$ at such a point steepens until it becomes vertical in finite time.

Following along the same line as the argument in \cite{HT1, Hur-breaking} for \eqref{E:whitham}-\eqref{def:M1/2}, where $c(\xi)=|\xi|^{\alpha-1}$, $0<\alpha<1/2$, the proof of Theorem~\ref{thm:main} examines the ordinary differential equations for $u$ and its derivatives of all orders along the characteristics, which involve $\mathcal{M}\partial_x$ and $\eta$ and its derivatives of all orders along the characteristics. In other words, we examine $\eta$, $u$ and their derivatives of all orders along the characteristics (see \eqref{e:h0}-\eqref{e:vn}). To the best of the authors' knowledge, this is new. Naumkin and Shishmar\"ev \cite{NS94} made a breaking argument for related, nonlinear nonlocal equations. But it does not apply to \eqref{E:main} (or \eqref{E:BW}) because of the severe nonlinearities. 

In Lemma~\ref{lem:Kn}, we make a straightforward calculation and show that the kernel associated with \eqref{def:K} is singular of a logarithmic order near zero. To compare, the kernel associated with the integral representation of $\mathcal{M}_{1/2}\partial_x$ (see \eqref{def:M1/2}) for the Whitham equation may not be written explicitly, although it behaves like $|x|^{-1/2}$ near zero; see \cite{Hur-breaking}, for instance, and references therein. Note that $\mathcal{M}\partial_x$ is less singular than $\mathcal{M}_{1/2}\partial_x$. On the other hand, the nonlinearities of \eqref{E:main} are much more severe than that of \eqref{E:whitham}, permitting $\eta$ and its derivatives to grow large along the characteristics (see \eqref{claim:h0}-\eqref{claim:hn}), when one attempts to bound the nonlocal forcing term involving $\eta$ along the characteristics by the nonlinearity in the latter equation of \eqref{E:main}. This is why we are unable to handle the nonlinearity of \eqref{E:BW}. 
We make strong use of that the kernel associated with \eqref{def:K} less singular than a polynomial order near zero. Moreover, $\eta$ and its derivatives along the characteristics grow larger than what a logarithmic singularity can control, so that we cannot control the second derivative of $u$ along the characteristics, like in \cite{HT1} for \eqref{E:whitham}-\eqref{def:M1/2}, where $c(\xi)=|\xi|^{\alpha-1}$ and $\alpha>1/3$. We exploit the ``smoothing effects" of the characteristics when the derivative of $u$ is sufficiently negative (see \eqref{claim:X2} and \eqref{claim:X3}). 

It is physically more satisfying to prove wave breaking for $\eta$, rather than $u$. We believe that $\eta$ breaks when $u$ does. The proof of Theorem~\ref{thm:main}, however, does not explore blowup in the former equation of \eqref{E:main}. Moreover, it is desirable to prove wave breaking in \eqref{E:BW}, rather than \eqref{E:main}. This is a subject of future investigation.

\subsection*{Remarks on other Boussinesq-Whitham models}
Perhaps, the best known among Boussinesq's equations in the shallow water theory is
\begin{equation}\label{E:Bnesq1}
\partial_t^2\eta=\partial_x^2\eta+\frac13a\partial_x^4\eta+\frac32a\partial_x^2(\eta^2).
\end{equation}
Including the full dispersion in water waves, one may follow Whitham's heuristics and replace the square of the phase speed~$=1-\frac13a\xi^2$ by that of water waves (see \eqref{def:cWW}). The result becomes
\begin{equation}\label{E:BW1}
\partial_t^2\eta=\mathcal{M}\partial_x^2\eta+\frac32a\partial_x^2(\eta^2),
\end{equation}
where $\mathcal{M}$ is in \eqref{def:M}. It is one of many which stake the claim to the ``Boussinesq-Whitham equation." 
Unfortunately, the initial value problem associated with the linear part of \eqref{E:BW1} is ill-posed in the periodic setting. Hence, it is not suitable for the purpose of describing wave packet propagation.

Under the assumption $\partial_t\eta+\partial_x\eta=O(a)$, \eqref{E:Bnesq1} is formally equivalent to
\[
\partial_t^2\eta=\frac13\partial_t^2\partial_x^2\eta+\partial_x^2\eta+\frac32a\partial_x^2(\eta^2)
\]
up to the order of $a$. 
Including the full dispersion in water waves, likewise, one arrives at
\begin{equation}\label{E:BW2}
\partial_t^2\eta=\mathcal{M}\Big(\partial_x^2\eta+\frac32a\partial_x^2(\eta^2)\Big).
\end{equation} 
The initial value problem for \eqref{E:BW2} is well-posed at least locally in time. But it fails to explain the Benjamin-Feir instability; 
see \cite{HP1}, for instance, for details. Hence, it is a poor candidate for the purpose of studying the stability of Stokes waves. In contrast, one of the authors~\cite{HP2} proved the Benjamin-Feir instability in \eqref{E:BW}-\eqref{def:M}. 

Saut~\cite{Saut} (see also \cite{Dobrokhotov}) alternatively proposed
\begin{equation}\label{E:BW3}
\begin{aligned}
&\partial_t\eta+\mathcal{M}\partial_xu+a\partial_x(u\eta)= 0,\\
&\partial_tu+\partial_x\eta+a\,u\partial_xu=0,
\end{aligned}
\end{equation}
as Boussinesq-Whitham equations. They are formally equivalent to \eqref{E:BW}-\eqref{def:M} up to the order of $a$. But, to the best of the authors' knowledge, the well-posedness issue for \eqref{E:BW3} has not been studied. In contrast, in Section~\ref{sec:existence}, we establish the local-in-time well-posedness for \eqref{E:main}-\eqref{def:K}.

To conclude, \eqref{E:BW} (or \eqref{E:main}) is preferred over other Boussinesq-Whitham models for the purpose of studying the breaking and stability of water waves.

\section{Local well-posedness}\label{sec:existence}

We discuss the initial value problem associated with \eqref{E:main}-\eqref{def:K} or, equivalently,
\begin{equation}\label{E:main'}
\begin{aligned}
&\partial_t\eta+\partial_xu+u\partial_x\eta=0,\\
&\partial_tu-\mathcal{H}h+\mathcal{R}\eta+u\partial_xu=0.
\end{aligned}
\end{equation}
Here $\mathcal{H}$ denotes the Hilbert transform, defined as a Fourier multiplier as 
\[
\widehat{\mathcal{H}f}(\xi)=-i\text{sgn}(\xi)\widehat{f}(\xi).
\] 
Since
\[
|\text{sgn}(\xi)-\tanh(\xi)|\leq e^{-|\xi|} \qquad\text{pointwise in $\mathbb{R}$}
\]
by a direct calculation (see \cite[Lemma~2.15]{Yosihara}, for instance), we find that 
\begin{equation}\label{I:R}
\|\mathcal{R}f\|_{H^s(\mathbb{R})}\leq C\|f\|_{L^2(\mathbb{R})} \qquad\text{for any $s\geq 0$},
\end{equation}
where $C>0$ a constant is independent of $f$. 


\begin{theorem}[Local well-posedness]\label{T:exist}
If $\eta_0\in H^s(\mathbb{R})$ and $u_0\in H^{s+1/2}(\mathbb{R})$ for $s> 2$ then a unique solution of \eqref{E:main}-\eqref{def:K},
\[
\eta(x,0)=\eta_0(x)\quad\text{and}\quad u(x,0)=u_0(x),
\]
exists in $H^s(\mathbb{R})\times H^{s+1/2}(\mathbb{R})$ during the interval of time $[0,T)$ for some $T>0$. Moreover, $(\eta_0,u_0)\mapsto (\eta(t),u(t))$ is continuous on $H^s(\mathbb{R})\times H^{s+1/2}(\mathbb{R})$ for all $t\in [0,T)$.
\end{theorem}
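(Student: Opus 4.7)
The plan is to proceed by Friedrichs mollification together with an energy estimate built on a symmetrizer compatible with the half-wave dispersion $\omega^2=|\xi|$ of the linear part of \eqref{E:main'}. First I would let $J_\epsilon$ be a standard mollifier and insert $J_\epsilon$ into every nonlinearity and derivative of \eqref{E:main'} to obtain a regularized system whose right-hand side is locally Lipschitz on $L^2\times L^2$; Picard–Lindel\"of then yields a unique smooth solution $(\eta^\epsilon,u^\epsilon)$ for each $\epsilon>0$, and everything reduces to obtaining an a priori bound uniform in $\epsilon$.

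Writing $\Lambda:=(1-\partial_x^2)^{1/2}$ and $|D|:=(-\partial_x^2)^{1/2}$, my choice of energy would be
\[
E_s(t):=\|\eta(t)\|_{H^s}^2+\|u(t)\|_{L^2}^2+\bigl\||D|^{1/2}\Lambda^s u(t)\bigr\|_{L^2}^2 \sim \|\eta\|_{H^s}^2+\|u\|_{H^{s+1/2}}^2.
\]
The asymmetric weighting is dictated by the identity $\mathcal{H}|D|=|D|\mathcal{H}=-\partial_x$ (on the Fourier side, $(-i\,\mathrm{sgn}\,\xi)|\xi|=-i\xi$), which makes the leading dispersive cross-terms in $\tfrac{d}{dt}E_s$ cancel after I pair $\Lambda^s$ of the first equation against $\Lambda^s\eta$ and $\Lambda^s$ of the second against $|D|\Lambda^s u$. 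The smoothing perturbation $\mathcal{R}\eta$ is absorbed by \eqref{I:R}.

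The nonlinearities I would handle by the commutator-plus-integration-by-parts pattern. For $u\partial_x\eta$, the splitting $\Lambda^s(u\partial_x\eta)=u\Lambda^s\partial_x\eta+[\Lambda^s,u]\partial_x\eta$ reduces the leading piece to $\tfrac12\int\partial_x u\,(\Lambda^s\eta)^2\,dx\lesssim\|u\|_{H^{s+1/2}}\|\eta\|_{H^s}^2$, with the commutator of the same size by Kato–Ponce (using $s>2$ to bound $\|\partial_x u\|_{L^\infty}$ and $\|\partial_x\eta\|_{L^\infty}$ by Sobolev embedding). The hard part will be $u\partial_x u$ in the $u$-equation: the $|D|^{1/2}$-symmetrization is half a derivative too weak relative to the quasilinear nonlinearity. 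Exploiting the Burgers divergence structure I would write
\[
|D|^{1/2}\Lambda^s(u\partial_x u)=u\,|D|^{1/2}\Lambda^s\partial_x u+\bigl[|D|^{1/2}\Lambda^s,\,u\bigr]\partial_x u,
\]
set $w:=|D|^{1/2}\Lambda^s u$, and integrate the main piece by parts to obtain $\tfrac12\int\partial_x u\,w^2\,dx\lesssim\|u\|_{H^{s+1/2}}^3$; the commutator is then bounded by a fractional Kato–Ponce estimate at order $s+\tfrac12$, again by $\|u\|_{H^{s+1/2}}^3$. Collecting everything gives $\tfrac{d}{dt}E_s\lesssim E_s^{3/2}$, and Gronwall delivers a lifespan $T>0$ depending only on $E_s(0)$, uniformly in $\epsilon$.

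Passage to the limit is then the standard weak-$*$ plus Aubin–Lions argument, using the uniform $L^\infty(0,T;H^s\times H^{s+1/2})$ bound together with $\partial_t$-bounds read off from the equation in a weaker norm. Uniqueness I would prove by applying the same symmetrized energy to the difference of two solutions at low regularity (in $L^2\times H^{1/2}$, say), which closes because $s>2$ makes the coefficients Lipschitz. Continuous dependence on data in $H^s\times H^{s+1/2}$ is then obtained by the Bona–Smith trick, approximating $(\eta_0,u_0)$ by smoother data and combining the high-regularity a priori estimate with the low-regularity stability. The only genuine obstacle is the quasilinear loss of derivatives in $u\partial_x u$, and it is precisely the combination of the asymmetric energy $E_s$ with the divergence identity $u\partial_x u=\tfrac12\partial_x(u^2)$ that rescues the estimate.
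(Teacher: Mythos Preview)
Your proposal is correct and takes essentially the same approach as the paper: both build an asymmetric energy placing an extra half-derivative on $u$ (the paper uses $\int (\partial_x^j u)\,\mathcal{H}\partial_x(\partial_x^j u)\,dx$, which is exactly your $\||D|^{1/2}\partial_x^j u\|_{L^2}^2$), both rely on the resulting cancellation of the leading linear cross-terms, and both close the quasilinear $u\partial_xu$ contribution by a commutator argument---the paper via a tailored $\mathcal{H}$-commutator lemma (Lemma~\ref{L:smoothing}) at integer regularity, you via Kato--Ponce at fractional $s$. The paper omits the regularization, compactness, uniqueness and Bona--Smith steps by citing Kato, whereas you sketch them explicitly; this is a difference in exposition, not in strategy.
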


Combining an a priori bound and a compactness argument, one may be able to establish local-in-time well-posedness for \eqref{E:shallow} in $H^s(\mathbb{R})\times H^{s+1/2}(\mathbb{R})$, $s>2$; see \cite{Kato}, for instance, for details. Without recourse to the dispersion effects, the argument in \cite{Kato} works for \eqref{E:main'}-\eqref{I:R} mutatis mutandis. Below we merely include how one obtains a priori bound for \eqref{E:main'}-\eqref{I:R}, and we omit other parts of the proof.

\

Note that $\|\mathcal{H}f\|_{L^2(\mathbb{R})}=\|f\|_{L^2(\mathbb{R})}$ and $\mathcal{H}^2=-1$. Note that $\Lambda:=\mathcal{H}\partial_x$ is self-adjoint and linked with half-integer Sobolev spaces. Specifically,
\[
\int^\infty_{-\infty}(f^2+f\Lambda f)~dx
\]
is equivalent to $\|f\|_{H^{1/2}(\mathbb{R})}^2$. Moreover the commutator of $\Lambda$ is ``smoothing."

\begin{lemma}\label{L:smoothing}
It follows that
\begin{equation}\label{E:smoothing}
\int^\infty_{-\infty}af\mathcal{H}\partial_x f~dx\leq C\|a\|_{H^{3/2+}(\mathbb{R})}\|f\|_{H^{1/2}(\mathbb{R})}^2
\quad\text{and}\quad
\int^\infty_{-\infty}a(\partial_xf)\mathcal{H}\partial_xf~dx\leq C\|a\|_{H^{5/2+}(\mathbb{R})}\|f\|_{L^2(\mathbb{R})}^2,
\end{equation}
where $C>0$ a constant is independent of $f$ and $a$.
\end{lemma}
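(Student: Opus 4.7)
The plan is to exploit that $\Lambda=\mathcal H\partial_x$ is self-adjoint with Fourier symbol $|\xi|$ and to recast each inequality as a bilinear form whose symbol can be bounded after an explicit symmetrization. The first estimate will reduce to an $H^{1/2}$--$H^{-1/2}$ duality pairing backed by the Sobolev-algebra property; the second genuinely requires extracting the cancellation produced by the Hilbert transform at the symbol level.

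For the first inequality, I would pair by duality and invoke the fractional Leibniz rule:
\[
\left|\int_{-\infty}^\infty af\,\Lambda f\, dx\right|\le \|af\|_{H^{1/2}(\mathbb R)}\|\Lambda f\|_{H^{-1/2}(\mathbb R)}\le\|af\|_{H^{1/2}(\mathbb R)}\|f\|_{H^{1/2}(\mathbb R)}.
\]
Since $H^s(\mathbb R)$ is a Banach algebra for $s>1/2$, one has $\|af\|_{H^{1/2}}\le C\|a\|_{H^{1/2+\epsilon}}\|f\|_{H^{1/2}}$, and the embedding $H^{3/2+}(\mathbb R)\hookrightarrow H^{1/2+}(\mathbb R)$ closes the estimate.

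The second inequality is more delicate, because the pointwise product $(\partial_xf)(\Lambda f)$ is \emph{not} controlled by $(\partial_xf)^2$, so any direct Cauchy--Schwarz argument necessarily loses a derivative on $f$. I would work in Fourier. Plancherel together with a symmetrization in the dummy variables $\mu\leftrightarrow\nu$ gives
\[
\int_{-\infty}^\infty a(\partial_xf)\,\Lambda f\, dx=\frac{i}{2}\int_{-\infty}^\infty\!\int_{-\infty}^\infty\overline{\hat a(\mu+\nu)}\,S(\mu,\nu)\,\hat f(\mu)\hat f(\nu)\, d\mu\, d\nu,\qquad S(\mu,\nu):=\mu|\nu|+\nu|\mu|.
\]
The crucial algebraic fact is that $S(\mu,\nu)\equiv 0$ whenever $\text{sgn}(\mu)\ne\text{sgn}(\nu)$, while on the complementary region $\mu\nu\ge 0$ one has $|\mu+\nu|=|\mu|+|\nu|$, so $|S(\mu,\nu)|=2|\mu||\nu|\le\tfrac12|\mu+\nu|^2$. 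Substituting this bound and setting $\gamma=\mu+\nu$ I obtain
\[
\left|\int_{-\infty}^\infty a(\partial_xf)\,\Lambda f\, dx\right|\le\frac14\|f\|_{L^2(\mathbb R)}^2\int_{-\infty}^\infty|\gamma|^2|\hat a(\gamma)|\, d\gamma,
\]
and a final Cauchy--Schwarz against the $L^2$-function $\langle\gamma\rangle^{-(5/2+\epsilon)}$ yields $\int|\gamma|^2|\hat a(\gamma)|\, d\gamma\le C\|a\|_{H^{5/2+\epsilon}(\mathbb R)}$.

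The principal obstacle is precisely the second estimate: trading two full derivatives off of $f$ against only $5/2+\epsilon$ derivatives on $a$ cannot be accomplished by integration by parts or by a Kato--Ponce commutator expansion alone, each of which would leave an $H^1$ norm of $f$ on the right. The decisive mechanism is the symbol-level identity $\mu|\nu|+\nu|\mu|=0$ on the sector $\mu\nu<0$ --- a manifestation of the frequency-splitting performed by the Hilbert transform --- which is invisible in absolute values and only materializes after symmetrization in Fourier.
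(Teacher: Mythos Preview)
Your argument is correct, though packaged differently from the paper's. For the first inequality the paper splits $\int af\Lambda f\,dx=\int a(\Lambda^{1/2}f)^2\,dx+\int(\Lambda^{1/2}[\Lambda^{1/2},a]f)f\,dx$ and bounds the commutator via the pointwise symbol estimate $|\xi|^{1/2}\bigl||\xi|^{1/2}-|\xi_1|^{1/2}\bigr|\lesssim|\xi-\xi_1|$, arriving at $\||\xi|\hat a\|_{L^1}\lesssim\|a\|_{H^{3/2+}}$. Your duality-plus-product route is shorter and in fact yields the sharper bound $\|a\|_{H^{1/2+}}$; one caveat is that the inequality $\|af\|_{H^{1/2}}\lesssim\|a\|_{H^{1/2+\epsilon}}\|f\|_{H^{1/2}}$ is \emph{not} the Banach-algebra property you invoke (that would require $f\in H^{1/2+\epsilon}$ as well) but rather the Sobolev multiplier/product estimate $\|fg\|_{H^s}\lesssim\|f\|_{H^{s_1}}\|g\|_{H^{s_2}}$ for $s_1,s_2\ge s$ and $s_1+s_2>s+\tfrac12$, which is standard and does apply here. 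For the second inequality both proofs hinge on the same cancellation: the paper first rewrites the integral as $-\tfrac12\int([\mathcal H,a]\partial_xf)\partial_xf\,dx$ using skew-adjointness of $\mathcal H$ and then observes in Fourier that the commutator symbol $\text{sgn}(\xi)-\text{sgn}(\xi_1)$ is supported where $|\xi|+|\xi_1|\le|\xi-\xi_1|$; your symmetrized symbol $S(\mu,\nu)=\mu|\nu|+\nu|\mu|$ vanishing on $\{\mu\nu<0\}$ is exactly the same phenomenon, reached without the intermediate commutator step. The paper's commutator formulation has the virtue of making the gain transparent as a smoothing property of $[\mathcal H,a]$, while your direct symmetrization is slightly more economical.
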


\begin{proof}
Note that $\Lambda^{1/2}$ is self-adjoint, and we calculate that 
\[
\int af\mathcal{H}\partial_xf~dx=\int a(\Lambda^{1/2}f)^2~dx+\int(\Lambda^{1/2}[\Lambda^{1/2},a]f)f~dx.
\]
Clearly, the first term of the right side is bounded by $\|a\|_{L^\infty}\|\Lambda^{1/2}f\|_{L^2}^2$. We claim that the second term of the right side is bounded by $\||\xi|\widehat{a}\|_{L^1}\|f\|_{H^{1/2}}^2$ up to multiplication by a constant. Indeed, since 
\[
(\Lambda^{1/2}[\Lambda^{1/2},a]f)^\wedge(\xi)
=\frac{1}{\sqrt{2\pi}}\int^\infty_{-\infty}|\xi|^{1/2}(|\xi|^{1/2}-|\xi_1|^{1/2})\widehat{a}(\xi-\xi_1)\widehat{f}(\xi_1)~d\xi_1
\]
and since $|\xi|^{1/2}||\xi|^{1/2}-|\xi_1|^{1/2}|\leq C|\xi-\xi_1|$ for all $\xi,\xi_1\in\mathbb{R}$ for some constant $C>0$ by a direct calculation (see the proof of \cite[Lemma~2.14]{Yosihara}, for instance), Young's inequality and the Parseval theorem assert that 
\[
\|\Lambda^{1/2}[\Lambda^{1/2},a]f\|_{L^2}\leq C\||\xi|\widehat{a}\|_{L^1}\|f\|_{L^2}
\]
for some constant $C>0$ independent of $f$ and $a$. H\"older's inequality therefore proves the claim. The first inequality of \eqref{E:smoothing} then follows by the Sobolev inequality. 

Note that $\mathcal{H}$ is skew-adjoint, and we calculate that 
\begin{align*}
\int a(\partial_xf)\mathcal{H}\partial_xf~dx
=&-\int a(\mathcal{H}\partial_xf)\partial_xf~dx-\int([\mathcal{H},a]\partial_xf)\partial_xf~dx\\
=&-\frac12\int([\mathcal{H},a]\partial_xf)\partial_xf~dx.
\end{align*}
Since 
\[
(\partial_x[\mathcal{H},a]\partial_xf)^\wedge(\xi)
=-\frac{1}{\sqrt{2\pi}}\int^\infty_{-\infty}\xi(\text{sgn}(\xi)-\text{sgn}(\xi_1))\widehat{a}(\xi-\xi_1)\xi_1\widehat{f}(\xi_1)~d\xi_1
\]
and since $|\xi|+|\xi_1|\leq|\xi-\xi_1|$ when $\text{sgn}(\xi)\neq\text{sgn}(\xi_1)$ by a direct calculation (see the proof of \cite[Lemma~2.14]{Yosihara}, for instance), Young's inequality and the Parseval theorem assert that
\[
\|\partial_x[\mathcal{H},a]\partial_xf\|_{L^2}\leq \frac12\||\xi|^2\widehat{a}\|_{L^1}\|f\|_{L^2}.
\]
H\"older's inequality  and the Sobolev inequality then prove the second inequality of \eqref{E:smoothing}. This completes the proof.
\end{proof}

To proceed, for $k\geq 1$ an integer, let
\begin{equation}\label{def:energy}
E_k^2(t)=\frac12\|\eta(t)\|_{L^2(\mathbb{R})}^2+\frac12\|u(t)\|_{L^2(\mathbb{R})}^2+\sum_{j=1}^ke_j(t),
\end{equation}
where 
\begin{equation}\label{def:Ej}
e_j(t)=\frac12\int^\infty_{-\infty}((\partial_x^j\eta(t))^2+(\partial_x^ju(t))\Lambda(\partial_x^ju(t)))~dx.
\end{equation}
Note tha $E_k(t)$ is equivalent to $\|\eta(t)\|_{H^k(\mathbb{R})}+\|u(t)\|_{H^{k+1/2}(\mathbb{R})}$.

\begin{lemma}[A priori bound]\label{lem:energy}
If $\eta\in H^k(\mathbb{R})$ and $u\in H^{k+1/2}(\mathbb{R})$, for $k\geq 2$ an integer, solve \eqref{E:main'}-\eqref{I:R} during the interval of time $[0,T)$ for some $T>0$ then
\begin{equation}\label{I:energy}
E_k(t)\leq \frac{E_k(0)}{1-CE_k(0)t}
\end{equation}
for all $t\in[0,T']$, $0<T'<T$, where $C>0$ a constant is independent of $\eta$ and $u$, and $T'$ depends upon $E_k(0)$. Moreover,
\begin{equation}\label{I:norm}
\|\eta(t)\|_{H^k(\mathbb{R})}+\|u(t)\|_{H^{k+1/2}(\mathbb{R})}\leq C(t, \|\eta(0)\|_{H^k(\mathbb{R})}, \|u(0)\|_{H^{k+1/2}(\mathbb{R})})
\end{equation}
for all $t\in[0,T']$.
\end{lemma}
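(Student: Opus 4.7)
The plan is to differentiate $E_k^2(t)$ in time along solutions of \eqref{E:main'}--\eqref{I:R}, show that $\frac{d}{dt}E_k^2(t)\le C(E_k(t)^2+E_k(t)^3)$, and integrate this Riccati-type inequality to obtain \eqref{I:energy}. The equivalence of $E_k(t)$ with $\|\eta(t)\|_{H^k(\mathbb{R})}+\|u(t)\|_{H^{k+1/2}(\mathbb{R})}$ then gives \eqref{I:norm} immediately.

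For each $1\le j\le k$, applying $\partial_x^j$ to \eqref{E:main'} and using the self-adjointness of $\Lambda$, the linear part of $\frac{d}{dt}e_j(t)$ reads
\[
-\int(\partial_x^j\eta)(\partial_x^{j+1}u)\,dx+\int\Lambda(\partial_x^j u)(\mathcal{H}\partial_x^j\eta)\,dx.
\]
Since $\Lambda=\mathcal{H}\partial_x$ and $\int(\mathcal{H}f)(\mathcal{H}g)\,dx=\int fg\,dx$, the second integral equals $\int(\partial_x^{j+1}u)(\partial_x^j\eta)\,dx$, so the two cancel exactly. This is precisely why the weight $\Lambda$ is inserted in \eqref{def:Ej}. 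The remaining smoothing contribution $-\int\Lambda(\partial_x^j u)(\mathcal{R}\partial_x^j\eta)\,dx$ is bounded, upon splitting $\Lambda=\Lambda^{1/2}\Lambda^{1/2}$, by $C\|\Lambda^{1/2}\mathcal{R}\partial_x^j\eta\|_{L^2}\|\Lambda^{1/2}\partial_x^j u\|_{L^2}\lesssim E_k^2$ via \eqref{I:R}; the analogous $L^2$-level linear cross-terms $\int u\,\mathcal{H}\eta\,dx$ and $-\int\eta\,\partial_xu\,dx$ are likewise absorbed into $CE_k^2$.

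The nonlinear contribution from the $\eta$-equation, $-\int(\partial_x^j\eta)\partial_x^j(u\partial_x\eta)\,dx$, splits via Leibniz into its leading piece $\tfrac12\int(\partial_xu)(\partial_x^j\eta)^2\,dx$ (bounded by $\|\partial_xu\|_{L^\infty}\|\partial_x^j\eta\|_{L^2}^2\lesssim E_k^3$ through Sobolev embedding) plus commutator terms, which are handled by Moser/Kato--Ponce product estimates at cost $E_k^3$. The nonlinear contribution from the $u$-equation, $-\int\Lambda(\partial_x^j u)\partial_x^j(u\partial_x u)\,dx$, contains the crucial top-order piece
\[
-\int u\,(\partial_x^{j+1}u)\,\mathcal{H}(\partial_x^{j+1}u)\,dx,
\]
which is precisely the object controlled by the second inequality of Lemma~\ref{L:smoothing}: applied with $a=u$ and $f=\partial_x^j u$, it is bounded by $C\|u\|_{H^{5/2+}}\|\partial_x^j u\|_{L^2}^2\lesssim E_k^3$ for $k\ge 2$. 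The remaining commutator $[\partial_x^j,u]\partial_xu$ is paired against $\Lambda^{1/2}\partial_x^j u$ and controlled by $\Lambda^{1/2}$-commutator estimates, again at cost $E_k^3$.

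Summing contributions over $0\le j\le k$ and integrating the resulting Riccati inequality yields \eqref{I:energy} on $[0,T']$ with $T'=T'(E_k(0))>0$. The main obstacle is the quasi-linear top-order term in the $u$-equation: the convective nonlinearity $u\partial_x u$ is not skew-symmetric against $\Lambda\partial_x^j u$, so a naive integration by parts would leave a half-derivative loss, an apparent loss that Lemma~\ref{L:smoothing} recovers via a careful commutator analysis of $\Lambda^{1/2}$ with the Hilbert transform.
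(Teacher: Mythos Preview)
Your proposal is correct and follows essentially the same approach as the paper: differentiate $E_k^2$, observe the exact cancellation of the linear cross terms (this is indeed the reason for the weight $\Lambda$ in $e_j$), control the $\mathcal{R}$ contribution via \eqref{I:R}, and handle the quasilinear top-order piece $\int u(\partial_x^{j+1}u)\mathcal{H}(\partial_x^{j+1}u)\,dx$ by the second inequality of Lemma~\ref{L:smoothing}.

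The only noteworthy difference is in the treatment of the sub-principal term $j\int(\partial_xu)(\partial_x^ju)\Lambda(\partial_x^ju)\,dx$ coming from the commutator $[\partial_x^j,u]\partial_xu$. The paper isolates this term explicitly and applies the \emph{first} inequality of Lemma~\ref{L:smoothing} with $a=\partial_xu$, $f=\partial_x^ju$, getting the bound $C\|\partial_xu\|_{H^{3/2+}}\|\partial_x^ju\|_{H^{1/2}}^2$ directly; the genuinely lower-order remainder is then dispatched by the fractional Leibniz rule. You instead lump the entire commutator together, move one $\Lambda^{1/2}$ across, and invoke Kato--Ponce in $H^{1/2}$; this also works for $k\ge 2$, but the paper's route makes transparent that \emph{both} parts of Lemma~\ref{L:smoothing} are in play and avoids appealing to an additional fractional product estimate at the sub-principal level.
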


\begin{proof}
For $j\geq 1$ an integer, differentiating \eqref{def:Ej} in time and using \eqref{E:main'}, we arrive at
\begin{align*}
\frac{de_j}{dt}=&\int((\partial_t\partial_x^j\eta)(\partial_x^j\eta)
+(\partial_t\partial_x^ju)\Lambda(\partial_x^ju))~dx \notag \\
=&-\int\partial_x^j(\partial_xu+u\partial_x\eta)(\partial_x^j\eta)~dx
-\int\partial_x^j(-\mathcal{H}\eta+\mathcal{R}\eta+u\partial_xu)\Lambda(\partial_x^ju)~dx \notag \\
=:&(I)+(II)
\end{align*}
during the interval of time $(0,T)$. An integration by parts leads to that 
\begin{align}\label{e:1}
(I)=-\int(\partial_x^{j+1}u)(\partial_x^j\eta)~dx&+\frac12\int(\partial_xu)(\partial_x^j\eta)^2~dx \\
&-\int(\partial_x^j(u\partial_x\eta)-u(\partial_x^{j+1}\eta))(\partial_x^j\eta)~dx. \notag
\end{align}
Since $\Lambda=\mathcal{H}\partial_x$, $\mathcal{H}$ is skew-adjoint and $\mathcal{H}^2=-1$, moreover,
\begin{align}\label{e:2}
(II)=&-\int(\partial_x^{j+1}\eta)(\partial_x^ju)~dx-\int(\mathcal{H}\partial_x^{j+1}\mathcal{R}\eta)(\partial_x^ju)~dx\\
&-\int u(\partial_x^{j+1}u)\mathcal{H}(\partial_x^{j+1}u)~dx
-j\int(\partial_xu)(\partial_x^ju)(\mathcal{H}\partial_x^{j+1}u)~dx \notag \\
&-\int(\partial_x^j(u\partial_xu)-u(\partial_x^{j+1}u)-j(\partial_xu)(\partial_x^ju))\Lambda(\partial_x^ju)~dx. \notag
\end{align}

Note that the first term of the right side of \eqref{e:1} and the first term of the right side of \eqref{e:2} cancel each other when added together after an integration by parts. Note that the second term of the right side of \eqref{e:1} is bounded by $\frac12\|\partial_xu\|_{L^\infty}\|\partial_x^j\eta\|_{L^2}^2$, and the last term of the right side of \eqref{e:1} is bounded by $\|u\|_{H^j}\|\partial_x^j\eta\|_{L^2}^2$ up to multiplication by a constant by the Leibniz rule. Note that the second term of the right side of \eqref{e:2} is bounded by $\|\eta\|_{L^2}\|\partial_x^ju\|_{L^2}$ by \eqref{I:R}, and the third and the fourth terms of the right side of \eqref{e:2} are bounded by $\|u\|_{H^{5/2+}}\|\partial_x^ju\|_{H^{1/2}}^2$ by \eqref{E:smoothing}. Moreover, note that the last term of the right side of \eqref{e:2}, for $j\geq 2$ an integer, is bounded by $\|u\|_{H^{j+1/2}}^2\|\Lambda^{1/2}\partial_x^ju\|_{L^2}$ up to multiplication by a constant by the fractional Leibniz rule and the Sobolev inequality. To recapitulate, 
\begin{equation}\label{E:ee}
\frac{de_j}{dt}\leq C(1+\|u\|_{H^{5/2+}}+\|u\|_{H^{j+1/2}})(\|\eta\|_{H^j}^2+\|u\|_{H^{j+1/2}}^2)
\end{equation}
for $j\geq 2$ an integer during the interval of time $(0,T)$, where $C>0$ a constant is independent of $\eta$ and $u$. 

To proceed, we use \eqref{E:main'} and integrate by parts to show that
\begin{align}
\frac12\frac{d}{dt}\|\eta\|_{L^2}^2=&-\int(\partial_xu+u\partial_x\eta)\eta~dx\leq\|\partial_xu\|_{L^2}\|\eta\|_{L^2}
+\frac12\|\partial_xu\|_{L^\infty}\|\eta\|_{L^2}^2, \label{E:eh} \\
\frac12\frac{d}{dt}\|u\|_{L^2}^2=&\int(\mathcal{H}\eta-\mathcal{R}\eta-u\partial_xu)u~dx\leq2\|\eta\|_{L^2}\|u\|_{L^2}+\|\partial_xu\|_{L^\infty}\|u\|_{L^2}^2 \label{E:eu}
\end{align}
during the interval of time $(0,T)$. Adding \eqref{E:ee} through \eqref{E:eu}, we deduce that 
\[
\frac{dE_k}{dt}\leq C E_k^2
\]
for $k\geq 2$ an integer during the interval of time $(0,T)$, where $C>0$ a constant is independent of $\eta$ and $u$. Therefore \eqref{I:energy} follows because it invites a solution until the time $T'=(CE_k(0))^{-1}$. Furthermore \eqref{I:norm} follows because $E_k(t)$ is equivalent to $\|\eta(t)\|_{H^k}+\|u(t)\|_{H^{k+1/2}}$. This completes the proof.
\end{proof}

\section{Proof of Theorem~\ref{thm:main}}\label{sec:proof}
We assume that $\eta_0$ and $u_0$ satisfy \eqref{A:un}-\eqref{A:hn}, \eqref{A:m1}-\eqref{A:m3}. Let $\eta$ and $u$ be the unique solution of \eqref{E:main}-\eqref{def:K},  
\[
\eta(x,0)=\eta_0(x)\quad\text{and}\quad u(x,0)=u_0(x),
\] 
in $C^\infty([0,T);H^\infty(\mathbb{R})\times H^\infty(\mathbb{R}))$ for some $T>0$. We assume that $T$ is the maximal time of existence.

\

For $x\in\mathbb{R}$, let $X(t;x)$ solve
\begin{equation}\label{def:X}
\frac{dX}{dt}(t;x)=u(X(t;x),t)\quad\text{and}\quad X(0;x)=x.
\end{equation}
Since $u(x,t)$ is bounded and satisfies a Lipschitz condition in $x$ for all $x\in\mathbb{R}$ for all $t\in[0,T)$, it follows from the ODE theory that $X(\cdot\,;x)$ is continuously differentiable throughout the interval $(0,T)$ for all $x\in\mathbb{R}$. Since $u(x,t)$ is smooth in $x$ for all $x\in\mathbb{R}$ for all $t\in[0,T)$, furthermore, $x\mapsto X(\cdot\,;x)$ is infinitely continuously differentiable throughout the interval $(0,T)$ for all $x\in\mathbb{R}$. 

Let
\begin{equation}\label{def:vh_n}
\zeta_n(t;x)=(\partial_x^n\eta)(X(t;x),t)\quad\text{and}\quad v_n(t;x)=(\partial_x^nu)(X(t;x),t)
\end{equation}
for $n=0,1,2,\dots$. Differentiating \eqref{E:main} with respect to $x$ and evaluating the result at $x=X(t;x)$, we arrive at
\begin{align}
&\frac{d\zeta_0}{dt}+v_1=0,& & \label{e:h0} \\
&\frac{d\zeta_n}{dt}+\sum_{j=1}^n\left(\begin{matrix}n\\j\end{matrix}\right)v_j\zeta_{n+1-j}+v_{n+1}=0
& &\text{for $n=1,2,\dots$}, \label{e:hn}
\intertext{and}
&\frac{dv_0}{dt}+K_0(t;x)=0, & & \label{e:v0} \\
&\frac{dv_n}{dt}+\sum_{j=1}^n\left(\begin{matrix}n\\j\end{matrix}\right)v_jv_{n+1-j}+K_n(t;x)=0 
& &\text{for $n=1,2,\dots$}. \label{e:vn}
\end{align}
Here and elsewHere $\left(\begin{matrix}n\\j\end{matrix}\right)$'s are the binomial coefficients, and 
\begin{align*}
K_n(t;x)=&(\mathcal{M}\partial_x^{n+1}\eta)(X(t;x),t) \notag\\
=&-\frac12\int^\infty_{-\infty}\text{csch}(\tfrac\pi2(X(t;x)-y))((\partial_x^n\eta)(X(t;x),t)-(\partial_x^n\eta)(y,t))~dy
\end{align*}
for $n=0,1,2 \dots$ (see \eqref{def:K}). Since $u(x,t)$ is smooth, square integrable in $x$, and smooth in $t$ for all $x\in\mathbb{R}$  for all $t\in[0,T)$, and since $X(t;x)$ is continuously differentiable in $t$ and smooth in $x$ for all $t\in[0,T)$ for all $x\in\mathbb{R}$, it follows that $K_n(t;x)$ is continuously differentiable in $t$ and smooth in $x$ for all $t\in[0,T)$ for all $x\in\mathbb{R}$. 

\begin{lemma}\label{lem:Kn}
Let $0<\delta<1$. For $\epsilon>0$ is sufficiently small,
\begin{equation}\label{e:Kn}
|K_n(t;x)|<\frac{40}{\pi}\frac1\epsilon
(\delta^{-\epsilon}\|\zeta_n(t)\|_{L^\infty(\mathbb{R})}+\delta^{1-\epsilon}\|\zeta_{n+1}(t)\|_{L^\infty(\mathbb{R})}),\qquad n=0,1,2,\dots
\end{equation}
for all $t\in[0,T)$ for all $x\in\mathbb{R}$.
\end{lemma}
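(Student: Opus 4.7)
The plan is to change variables so the singular integral becomes a convolution against $\mathrm{csch}(\tfrac{\pi}{2}w)$, split at the scale $|w|=\delta$, and treat the near-field and far-field pieces by two different estimates on the difference quotient of $\partial_x^n\eta$. Writing $z:=X(t;x)$ and substituting $w=z-y$ in the integral representation of $K_n$ gives
\[
K_n(t;x)=-\frac{1}{2}\int_{-\infty}^{\infty}\mathrm{csch}(\tfrac{\pi}{2}w)\bigl[(\partial_x^n\eta)(z,t)-(\partial_x^n\eta)(z-w,t)\bigr]\,dw.
\]
The integrand is absolutely integrable because the bracket vanishes linearly at $w=0$, cancelling the simple pole of $\mathrm{csch}$.

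For $|w|\le\delta$, I would use the mean value theorem to bound the bracket by $|w|\,\|\zeta_{n+1}(t)\|_{L^\infty}$ together with the elementary estimate $|\mathrm{csch}(\tfrac{\pi}{2}w)|\le 2/(\pi|w|)$ (which follows from $|\sinh s|\ge|s|$). This yields a near-field contribution of size $\tfrac{2\delta}{\pi}\|\zeta_{n+1}(t)\|_{L^\infty}$, which, since $\delta<1$, is dominated by $\tfrac{2\delta^{1-\epsilon}}{\pi}\|\zeta_{n+1}(t)\|_{L^\infty}$. For $|w|>\delta$, I would bound the bracket crudely by $2\|\zeta_n(t)\|_{L^\infty}$ and compute the kernel integral via the antiderivative $\int\mathrm{csch}(s)\,ds=\log|\tanh(s/2)|$, which gives
\[
\int_{|w|>\delta}|\mathrm{csch}(\tfrac{\pi}{2}w)|\,dw=-\tfrac{4}{\pi}\log\tanh(\tfrac{\pi\delta}{4}).
\]
Since $\tanh(s)/s$ is decreasing on $(0,\infty)$, one has $\tanh(\tfrac{\pi\delta}{4})\ge c\,\delta$ for an explicit $c>0$ whenever $\delta\le 1$, so this integral is of order $\log(1/\delta)$ plus a harmless additive constant.

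The key step — and the reason for the parametric form of \eqref{e:Kn} — is the polynomial dominance $\log(1/\delta)\le\delta^{-\epsilon}/\epsilon$, valid for every $\delta\in(0,1)$ and $\epsilon>0$, which follows directly from $e^x\ge 1+x$ applied to $x=\epsilon\log(1/\delta)$. Applying this (and absorbing the additive $\log$-constant into $\delta^{-\epsilon}/\epsilon$, using that $\epsilon$ is small) converts the far-field estimate into a bound of the form $\tfrac{C}{\pi\epsilon}\delta^{-\epsilon}\|\zeta_n(t)\|_{L^\infty}$, with $C$ comfortably below $40$. Summing the two pieces and noting that the near-field coefficient $\tfrac{2}{\pi}$ is trivially bounded by $\tfrac{40}{\pi\epsilon}$ for small $\epsilon$ then delivers \eqref{e:Kn}.

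There is no conceptual obstacle: the only point that requires care is the numerical bookkeeping in the far-field estimate, which is where the precise constant $40/\pi$ is pinned down. The structural reason the argument closes uniformly in $n$ is that the scheme depends only on $\|\zeta_n(t)\|_{L^\infty}$ and $\|\zeta_{n+1}(t)\|_{L^\infty}$ and not on derivatives of the kernel — consistent with the paper's emphasis that the kernel in \eqref{def:K} is only logarithmically, rather than polynomially, singular.
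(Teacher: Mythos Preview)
Your argument is correct and in fact slightly more economical than the paper's. Both proofs split at $|w|=\delta$, bound the far-field by $2\|\zeta_n\|_{L^\infty}$ times the explicit kernel integral $\int_{|w|>\delta}|\mathrm{csch}(\tfrac{\pi}{2}w)|\,dw=\tfrac{4}{\pi}\log\coth(\tfrac{\pi\delta}{4})$, and then invoke the key inequality $\log(1/\delta)\le\delta^{-\epsilon}/\epsilon$ to trade the logarithm for a small negative power. The difference lies in the near-field: you apply the mean value theorem to the bracket together with $|\mathrm{csch}(s)|\le 1/|s|$, obtaining the clean bound $\tfrac{2\delta}{\pi}\|\zeta_{n+1}\|_{L^\infty}$ directly. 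The paper instead integrates by parts on $|y|<\delta$ using the antiderivative $\tfrac{1}{\pi}\log|\tanh(\tfrac{\pi}{4}y)|$, which produces a boundary term (an extra $\tfrac{2}{\pi}\log\coth(\tfrac{\pi\delta}{4})\|\zeta_n\|_{L^\infty}$) plus an integral $\tfrac{2}{\pi}\int_0^\delta\log\coth(\tfrac{\pi}{4}y)\,dy\,\|\zeta_{n+1}\|_{L^\infty}$; a separate half-page calculation is then needed to show this integral is bounded by $\tfrac{10}{\pi}\delta\log\coth(\tfrac{\pi\delta}{4})$. Your route avoids that calculation entirely and even yields smaller numerical constants (roughly $8/\pi$ and $2/\pi$ in place of $6/\pi$ and $20/\pi$), so the claimed $40/\pi$ holds with room to spare. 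The paper's integration-by-parts approach has the minor conceptual advantage of making the logarithmic nature of the kernel singularity explicit in both terms, but yours is shorter and equally rigorous.
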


The proof involves direct calculations of \eqref{def:K}. We include the detail in Appendix~\ref{sec:appendix}.

\

Let
\begin{equation}\label{def:q}
m(t)=\inf_{x\in\mathbb{R}}v_1(t;x)=\inf_{x\in\mathbb{R}}(\partial_xu)(x,t)=:m(0)q^{-1}(t).
\end{equation}
Note that $v_1(t;\,\cdot)$ and, hence, $m(t)$ are continuous for all $t\in[0,T)$. 
Clearly, $m(t)<0$ for all $t\in[0,T)$, $q(0)=1$ and $q(t)>0$ for all $t\in[0,T)$. Indeed, $m(t)\geq0$ would imply that $u(\cdot\,,t)$ be non-decreasing in $\mathbb{R}$ and, hence, $u(\cdot\,,t)\equiv0$. 

\

We shall show that 
\begin{equation}\label{claim:A}
|K_1(t;x)|<\epsilon^2m^2(t)\qquad\text{for all $t\in[0,T)$ for all $x\in\mathbb{R}$}.
\end{equation}
Since\footnote{Note in passing that $-\mathcal{M}\partial_x$ is the Hilbert transform for the infinite horizontal strip of unit depth, subject to the Neumann boundary condition at the bottom.} 
$\|\mathcal{M}\partial_xf\|_{L^2(\mathbb{R})}\leq \|f\|_{L^2(\mathbb{R})}$ by the Parseval theorem, it follows from \eqref{A:m1} and the Sobolev inequality that 
\[
|K_1(0;x)|=|\mathcal{M}\eta_0''(x)|\leq \|\eta_0\|_{H^{3/2+}(\mathbb{R})}<\epsilon^2m^2(0)\qquad\text{for all $x\in\mathbb{R}$}.
\]
That is, \eqref{claim:A} holds at $t=0$. 
Suppose on the contrary that $|K_1(T_1;x)|=\epsilon^2m^2(T_1)$ for some $T_1\in(0,T)$ for some $x\in\mathbb{R}$. By continuity, we may assume, without loss of generality, that 
\begin{equation}\label{I:A}
|K_1(t;x)|\leq \epsilon^2m^2(t)\qquad\text{for all $t\in[0,T)$ for all $x\in\mathbb{R}$}.
\end{equation}
We seek a contradiction.

\

Below we gather some preliminaries.

\begin{lemma}\label{lem:S}
Let $0<\gamma<1$. For $t\in[0,T_1]$, let
\begin{equation}\label{def:Sigma}
\Sigma_\gamma(t)=\{x\in\mathbb{R}: v_1(t;x)\leq (1-\gamma)m(t)\}.
\end{equation}
If $0<\epsilon\leq\gamma<1/2$ for $\epsilon>0$ sufficiently small then $\Sigma_\gamma(t_2)\subset\Sigma_\gamma(t_1)$ whenever $0\leq t_1\leq t_2\leq T_1$.
\end{lemma}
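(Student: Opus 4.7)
\emph{Plan.} I would argue by contradiction along characteristics, combining the scalar ODE \eqref{e:vn} with $n=1$ — namely $\frac{dv_1}{dt} = -v_1^2 - K_1$ — with a one-sided difference-quotient estimate on $m(t)$ derived by an envelope/argmin argument. Suppose some $x\in\mathbb{R}$ violates the inclusion, so that $v_1(t_1;x) > (1-\gamma)m(t_1)$ yet $v_1(t_2;x) \leq (1-\gamma)m(t_2)$. The function
\[
F(t) := v_1(t;x) - (1-\gamma)\,m(t)
\]
is continuous on $[t_1,t_2]$, strictly positive at $t_1$ and nonpositive at $t_2$; hence there is a least $\tau \in (t_1,t_2]$ with $F(\tau) = 0$ and $F > 0$ on $[t_1,\tau)$. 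By definition,
\[
D^- F(\tau) := \liminf_{h \to 0^+} \frac{F(\tau) - F(\tau-h)}{h} \leq 0.
\]

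The goal is to contradict this by showing $D^-F(\tau) > 0$. Since $v_1(\cdot;x)$ is $C^1$, its contribution to the Dini derivative is classical. At $\tau$ one has $v_1(\tau;x) = (1-\gamma)m(\tau)$, so the standing hypothesis \eqref{I:A} that $|K_1| \leq \epsilon^2 m^2$ yields
\[
\frac{dv_1}{dt}(\tau;x) \geq -\bigl[(1-\gamma)^2 + \epsilon^2\bigr] m^2(\tau).
\]
For $m$, since $\partial_x u(\cdot,t) \in H^\infty(\mathbb{R})$ vanishes at infinity the inf is attained at some $x^*(t)$; picking such an argmin at time $\tau - h$ gives
\[
m(\tau) - m(\tau - h) \leq v_1(\tau; x^*(\tau-h)) - v_1(\tau - h; x^*(\tau-h)) = h\,\frac{dv_1}{dt}(s_h; x^*(\tau-h))
\]
for some $s_h \in [\tau - h, \tau]$, by the mean value theorem. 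As $h \to 0^+$, continuity of $m$ forces $v_1(s_h; x^*(\tau-h)) \to m(\tau)$ along any subsequence, and another application of the ODE with \eqref{I:A} gives
\[
\limsup_{h \to 0^+} \frac{m(\tau) - m(\tau-h)}{h} \leq -(1-\epsilon^2)\,m^2(\tau).
\]
Combining the two estimates,
\[
D^- F(\tau) \geq m^2(\tau)\bigl[(1-\gamma)(1-\epsilon^2) - (1-\gamma)^2 - \epsilon^2\bigr] = m^2(\tau)\bigl[(1-\gamma)(\gamma - \epsilon^2) - \epsilon^2\bigr].
\]
Under the hypothesis $\epsilon \leq \gamma < 1/2$, one has $1-\gamma > 1/2$ and $\gamma - \epsilon^2 \geq \epsilon(1-\epsilon)$, so $(1-\gamma)(\gamma - \epsilon^2) > \tfrac12\epsilon(1-\epsilon)$, which exceeds $\epsilon^2$ once $\epsilon$ is small (say $\epsilon < 1/3$). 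Since $m(\tau) \neq 0$ because $m(t) < 0$ on $[0,T)$, we conclude $D^- F(\tau) > 0$, contradicting the sign constraint above.

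I expect the main obstacle to be the non-differentiability of $m(t) = \inf_x v_1(t;x)$, which is only locally Lipschitz in general. The envelope argument above sidesteps this by replacing $\frac{dm}{dt}$ with the one-sided $\limsup$ of its difference quotient, while still retaining the tight bound $\leq -(1-\epsilon^2)m^2$ needed to close the estimate. Once this is in place, the margin left by the hypotheses $\epsilon \leq \gamma < 1/2$ and $\epsilon$ sufficiently small makes the final inequality strict with room to spare.
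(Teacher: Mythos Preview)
Your argument is correct and takes a genuinely different route from the paper's. The paper argues by contradiction too, but instead of working with Dini derivatives at a first crossing time, it picks a \emph{second} characteristic label $x_2$ with $v_1(t_1;x_2)=m(t_1)$, shrinks the interval so that both $v_1(\cdot;x_1)$ and $v_1(\cdot;x_2)$ stay below $\tfrac12 m$, and then integrates the Riccati inequalities $\frac{dv_1}{dt}\gtrless -(1\pm\tfrac{\gamma}{2})v_1^2$ explicitly on $[t_1,t_2]$ to compare $v_1(t_2;x_1)$ against $(1-\gamma)m(t_2)$ directly. Your approach replaces this two-trajectory comparison by an envelope bound on the one-sided difference quotient of $m$, and closes the contradiction pointwise at $\tau$ rather than after integration.

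Each method has its merits. The paper's explicit Riccati integration is elementary and gives closed-form control of $v_1$ along both characteristics, at the cost of a mildly informal ``choose $t_1,t_2$ close'' reduction. Your Dini-derivative argument is cleaner on that front: the first-crossing-time device isolates exactly the instant where the contradiction must appear, and the envelope estimate on $m$ handles its possible non-differentiability transparently without ever integrating. The quantitative margins are essentially the same (the paper needs $4\epsilon^2<\gamma/2$, you need $(1-\gamma)(\gamma-\epsilon^2)>\epsilon^2$), and both reduce to $\epsilon$ small enough under $\epsilon\leq\gamma<1/2$.
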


The proof is very similar to that of \cite[Lemma~2.1]{HT1}. We include the detail in Appendix~\ref{sec:appendix} for completeness.

\begin{lemma}\label{lem:q}
$0<q(t)\leq 1$ and it is decreasing for all $t\in[0,T_1]$.
\end{lemma}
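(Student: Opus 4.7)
The plan is to convert the characteristic ODE \eqref{e:vn} with $n=1$,
\[
\frac{dv_1}{dt}(t;x) = -v_1^2(t;x) - K_1(t;x),
\]
into a differential inequality for $m(t)$, and then read off the claim from the identity $q(t) = m(0)/m(t)$ together with the fact that $m(t)<0$ throughout $[0,T)$.

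First I would observe that, since $(\eta,u)\in C^\infty([0,T);H^\infty\times H^\infty)$ and the right-hand side above is uniformly bounded on $[0,T_1]\times\mathbb{R}$, the family $\{v_1(\cdot\,;x)\}_{x\in\mathbb{R}}$ is equi-Lipschitz on $[0,T_1]$; thus $m$ itself is Lipschitz and hence differentiable almost everywhere by Rademacher's theorem. At any point of differentiability, the standard envelope/near-minimizer argument (as in \cite{CE98, HT1}) yields
\[
\dot m(t) \leq -m^2(t) - K_1(t;x^*(t))
\]
for an appropriate $x^*(t)$: when the infimum is attained, $y^*(t):=X(t;x^*(t))$ is a critical point of $\partial_x u(\cdot,t)$, so $\partial_x v_1(t;x^*(t))=0$ and the chain rule gives $\dot m(t)=\partial_t v_1(t;x^*(t))$, which the ODE identifies with the above right-hand side; in the non-attaining case one chooses a near-minimizing sequence and passes to the limit. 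Invoking the standing hypothesis \eqref{I:A}, $|K_1(t;x^*(t))|\leq \epsilon^2 m^2(t)$, I conclude
\[
\dot m(t) \leq -(1-\epsilon^2)\,m^2(t) < 0 \qquad\text{for a.e.\ } t\in[0,T_1],
\]
the strict inequality coming from $m(t)\neq 0$ on $[0,T)$.

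Integration then gives that $m$ is strictly decreasing on $[0,T_1]$, so $m(t)\leq m(0)<0$. Hence $q(t)=m(0)/m(t)\in(0,1]$, with $q(0)=1$; and since $t\mapsto 1/m(t)$ is increasing on the negative real axis (as $m$ is decreasing there) while $m(0)<0$, multiplication by $m(0)$ reverses the monotonicity and makes $q$ decreasing on $[0,T_1]$, completing the proof. The only genuine technical subtlety is the differentiation of the infimum when it is not attained at an interior point; this is handled by the standard subdifferential/near-minimizer argument already employed for the Whitham equation in \cite{CE98, HT1}, and requires no new ideas in the present setting.
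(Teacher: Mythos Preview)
Your argument is correct and gives a clean proof of the stated lemma. Differentiating the running infimum $m(t)$ via the Constantin--Escher envelope lemma, plugging the characteristic ODE for $v_1$, and invoking the standing bound \eqref{I:A} immediately yields $\dot m(t)\leq -(1-\epsilon^2)m^2(t)<0$ a.e., from which $q=m(0)/m$ is decreasing with $0<q\leq 1$. One small cosmetic point: you only need the one-sided inequality $\dot m(t)\leq \partial_t v_1(t;x^*)$ (which holds for any minimizer $x^*$ and even via near-minimizers), so the detour through $\partial_x v_1(t;x^*)=0$ and the chain rule is unnecessary; the equality case is not required for the sign conclusion.

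The paper takes a genuinely different route. Instead of differentiating $m$ directly, it fixes a characteristic $x\in\Sigma_\gamma(T_1)$, uses Lemma~\ref{lem:S} to trap $v_1(t;x)$ between $m(t)$ and $(1-\gamma)m(t)$ on the whole interval, and introduces the auxiliary quantity $r(t)=m(0)/v_1(t;x)$. The explicit integral formula \eqref{def:r} makes $r$ manifestly $C^1$, and the two-sided bound \eqref{I:dr/dt}, $(1+\epsilon)m(0)\leq dr/dt\leq(1-\epsilon)m(0)$, follows; monotonicity of $m$ (and hence $q$) is then read off. Your approach is shorter and avoids both $\Sigma_\gamma$ and $r$, but the paper's detour is not gratuitous: the byproducts \eqref{I:dr/dt} and the sandwich \eqref{I:qr} are precisely what drive Lemma~\ref{lem:qs}, the refined estimate \eqref{I:s>1h}, and the final blow-up time bounds \eqref{E:T}. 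If you adopt your proof here, you will still need to recover those quantitative two-sided estimates on $r$ (or an equivalent) before the subsequent arguments go through.
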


\begin{proof}
The proof is very similar to that of \cite[Lemma~2.2]{HT1}. Here we include the detail for future references.

For $0<\epsilon\leq\gamma<1/2$, $\epsilon>0$ sufficiently small, let $x\in\Sigma_\gamma(T_1)$, and we suppress it to simplify the exposition. Note from \eqref{def:q} and Lemma~\ref{lem:S} that 
\begin{equation}\label{I:mv}
m(t)\leq v_1(t)\leq (1-\gamma)m(t)<0\qquad \text{for all $t\in[0,T_1]$}.
\end{equation}
One may write the solution of \eqref{e:vn}, where $n=1$, as
\begin{equation}\label{def:r}
v_1(t)=\frac{v_1(0)}{1+v_1(0)\int^t_0(1+(v_1^{-2}K_1)(\tau))~d\tau}=:m(0)r^{-1}(t).
\end{equation}
Clearly, $r(t)$ is continuously differentiable and $r(t)>0$ for all $t\in[0,T_1]$. Since 
\[
|(v_1^{-2}K_1)(t)|<(1-\gamma)^{-2}\epsilon^2<\epsilon\qquad\text{for all $t\in[0,T_1]$}
\]
for $\epsilon>0$ sufficiently small, by \eqref{I:mv} and \eqref{I:A}, we infer from \eqref{def:r} that 
\begin{equation}\label{I:dr/dt}
(1+\epsilon)m(0)\leq \frac{dr}{dt}\leq (1-\epsilon)m(0)<0
\end{equation}
throughout the interval $(0,T_1)$. Consequently, $r(t)$ and, hence, $v_1(t)$ (see \eqref{def:r}) are decreasing for all $t\in[0,T_1]$. Furthermore, $m(t)$ and, hence, $q(t)$ (see \eqref{def:q}) are decreasing for all $t\in[0,T_1]$. This completes the proof. By the way, note from \eqref{def:q}, \eqref{def:r} and \eqref{I:mv} that
\begin{equation}\label{I:qr}
q(t)\leq r(t)\leq \frac{1}{1-\gamma}q(t)\qquad\text{for all $t\in[0,T_1]$}.
\end{equation} 
\end{proof}

\begin{lemma}\label{lem:qs}
For $s>0$, $s\neq1$, and for $t\in[0,T_1]$, 
\begin{align}
\int^t_0q^{-s}(\tau)~d\tau\leq &-\frac{1}{s-1}\frac{1}{(1-\epsilon)^{1+s}}\frac{1}{m(0)}
\Big(q^{1-s}(t)-\frac{1}{(1-\epsilon)^{1-s}}\Big). \label{I:s>1}
\intertext{For $t\in[0,T_1]$, }
\int^t_0q^{-1}(\tau)~d\tau\leq &-\frac{1}{(1-\epsilon)^2}\frac{1}{m(0)}\Big(\log\frac{1}{1-\epsilon}-\log q(t)\Big).
\label{I:s=1}
\end{align}
\end{lemma}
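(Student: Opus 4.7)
The plan is to bootstrap from the quantitative bounds on the auxiliary function $r(\tau;x)=m(0)/v_1(\tau;x)$ already established in the proof of Lemma~\ref{lem:q}, and then transfer them to $q$ via the sandwich \eqref{I:qr}. Fix any $x\in\Sigma_\epsilon(T_1)$ (taking $\gamma=\epsilon$), suppress $x$, and recall from \eqref{I:dr/dt} that $r'(\tau)\leq(1-\epsilon)m(0)<0$ for $\tau\in[0,T_1]$, so that $r$ is strictly decreasing. Moreover \eqref{I:qr} with $\gamma=\epsilon$ gives $q(\tau)\leq r(\tau)\leq q(\tau)/(1-\epsilon)$, whence
\[
q^{-s}(\tau)\leq(1-\epsilon)^{-s}r^{-s}(\tau),
\]
which reduces the problem to estimating $\int_0^t r^{-s}(\tau)\,d\tau$.

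For that integral I would change variables to $u=r(\tau)$, using $d\tau=du/r'(\tau)$ together with the pointwise estimate $-1/r'(\tau)\leq -1/((1-\epsilon)m(0))$ coming from \eqref{I:dr/dt}:
\[
\int_0^t r^{-s}(\tau)\,d\tau=\int_{r(t)}^{r(0)}\frac{u^{-s}}{-r'(\tau(u))}\,du\leq\frac{-1}{(1-\epsilon)m(0)}\int_{r(t)}^{r(0)}u^{-s}\,du.
\]
For $s\neq1$ the remaining integral evaluates to $(r(0)^{1-s}-r(t)^{1-s})/(1-s)$; for $s=1$ it evaluates to $\log(r(0)/r(t))$. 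To translate the endpoints back to $q$, I would use $q(0)=1$ together with \eqref{I:qr} at $\tau=0$ to get $r(0)\leq1/(1-\epsilon)$, and the left half of \eqref{I:qr} at $\tau=t$ to get $r(t)\geq q(t)$. Substituting these endpoint bounds, folding in the factor $(1-\epsilon)^{-s}$ from the previous paragraph, and rewriting $(1-\epsilon)^{s-1}=1/(1-\epsilon)^{1-s}$ yields \eqref{I:s>1}. The analogous substitution $\log(r(0)/r(t))\leq\log(1/(1-\epsilon))-\log q(t)$ yields \eqref{I:s=1}.

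The only real obstacle is sign bookkeeping: both $m(0)<0$ and the sign of $s-1$ flip the direction of several inequalities, and one must check that the replacements $r(0)\mapsto1/(1-\epsilon)$ and $r(t)\mapsto q(t)$ \emph{relax} (rather than tighten) the estimate in both regimes $s>1$ and $0<s<1$. In both cases this reduces to observing that while $u\mapsto u^{1-s}$ switches monotonicity as $s$ crosses $1$, the quantity that must be pushed upward in the final bound is $|r(t)^{1-s}-r(0)^{1-s}|$, which in both regimes is achieved by driving $r(t)$ down to its lower bound $q(t)$ and $r(0)$ up to its upper bound $1/(1-\epsilon)$; this collapses the two cases into the single stated formula.
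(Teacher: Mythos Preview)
Your argument is correct and is essentially the same as the paper's: the paper defers to \cite[Lemma~2.3]{HT1} and to the computation \eqref{I:s>1h}, which proceeds exactly as you do by passing from $q$ to $r$ via \eqref{I:qr}, using $dr/dt\leq(1-\epsilon)m(0)$ to integrate $r^{-s}\,dr$, and then translating the endpoints back to $q$. Your explicit case analysis for $s\gtrless1$ in the final paragraph is a helpful addition that the paper leaves implicit.
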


The proof is found in \cite[Lemma~2.3]{HT1}, for instance; see also the proof of \eqref{I:s>1h} below. 

\

To proceed, we shall show that 
\begin{align}
\|v_0(t)\|_{L^\infty(\mathbb{R})}=&\|u(t)\|_{L^\infty(\mathbb{R})}<C_0,\label{claim:v0} \\
\|v_1(t)\|_{L^\infty(\mathbb{R})}=&\|(\partial_xu)(t)\|_{L^\infty(\mathbb{R})}<C_1q^{-1}(t),\label{claim:v1}\\
\|v_n(t)\|_{L^\infty(\mathbb{R})}=&\|(\partial_x^nu)(t)\|_{L^\infty(\mathbb{R})}
<C_2n^{(n-1)/\alpha+1}b^{n-1}q^{-1-(n-1)\sigma}(t)\label{claim:vn} 
\end{align}
for $n=2,3,\dots$, and 
\begin{align}
\|\zeta_0(t)\|_{L^\infty(\mathbb{R})}=&\|\eta(t)\|_{L^\infty(\mathbb{R})}
<\frac{C_2}{\epsilon}q^{-\epsilon}(t),\label{claim:h0}\\
\|\zeta_n(t)\|_{L^\infty(\mathbb{R})}=&\|(\partial_x^n\eta)(t)\|_{L^\infty(\mathbb{R})}
<\frac{C_2}{\epsilon}n^{n/\alpha}b^{n-1}q^{-\epsilon-n\sigma}(t)\label{claim:hn}
\end{align}
for $n=1,2,\dots$ for all $t\in[0,T_1]$. Here 
\begin{equation}\label{def:C}
C_0=2(\|u_0\|_{L^\infty(\mathbb{R})}+\|u_0'\|_{L^\infty(\mathbb{R})}),\qquad 
C_1=2\|u_0'\|_{L^\infty(\mathbb{R})},\qquad
C_2=(-m(0))^{3/4},
\end{equation}
and
\begin{equation}\label{I:sigma}
\frac12(1+\epsilon)<\alpha< \frac23(1-14\epsilon)\quad\text{and}\quad\sigma=\frac32+6\epsilon
\end{equation}
so that 
\[
\sigma\alpha<1-10\epsilon.
\] 
Throughout the proof, we use
\[
C_0>C_1\quad\text{and}\quad \frac12C_1=\|u_0'\|_{L^\infty(\mathbb{R})}>C_2>1
\]
to simplify the exposition. It follows from \eqref{def:C}, \eqref{def:q}, \eqref{A:un} and \eqref{A:h0}, \eqref{A:hn} that
\begin{align*}
\|v_0(0)\|_{L^\infty(\mathbb{R})}=&\|u_0\|_{L^\infty(\mathbb{R})}<C_0, \\
\|v_1(0)\|_{L^\infty(\mathbb{R})}=&\|u_0'\|_{L^\infty(\mathbb{R})}<C_1=C_1q^{-1}(0),\\
\|v_n(0)\|_{L^\infty(\mathbb{R})}=&\|u_0^{(n)}\|_{L^\infty(\mathbb{R})}
<C_2n^{(n-1)/\alpha+1}b^{n-1}q^{-1-(n-1)\sigma}(0)
\intertext{for $n=2,3,\dots$, and } 
\|\zeta_0(0)\|_{L^\infty(\mathbb{R})}=&\|\eta_0\|_{L^\infty(\mathbb{R})}
<\frac{1}{2\epsilon}<\frac{C_2}{\epsilon}q^{-\epsilon}(0), \\
\|\zeta_n(0)\|_{L^\infty(\mathbb{R})}=&\|\eta^{(n)}\|_{L^\infty(\mathbb{R})}
<\frac{C_2}{\epsilon}n^{n/\alpha}b^{n-1}q^{-\epsilon-n\sigma}(0)
\end{align*}
for $n=1,2,\dots$. That is, \eqref{claim:v0}-\eqref{claim:vn} and \eqref{claim:h0}-\eqref{claim:hn} hold for all $n=0,1,2,\dots$ at $t=0$. Suppose on the contrary that \eqref{claim:v0}-\eqref{claim:vn} and \eqref{claim:h0}-\eqref{claim:hn} hold for all $n=0,1,2,\dots$ throughout the interval $[0,T_2)$, but one of the inequalities fails for some $n$ at $t=T_2$ for some $T_2\in(0,T_1]$. By continuity, we may assume that 
\begin{align}
\|v_0(t)\|_{L^\infty(\mathbb{R})}\leq&C_0,\label{I:v0} \\
\|v_1(t)\|_{L^\infty(\mathbb{R})}\leq&C_1q^{-1}(t),\label{I:v1}\\
\|v_n(t)\|_{L^\infty(\mathbb{R})}\leq&C_2n^{(n-1)/\alpha+1}b^{n-1}q^{-1-(n-1)\sigma}(t)\label{I:vn} 
\intertext{for $n=2,3,\dots$, and}
\|\zeta_0(t)\|_{L^\infty(\mathbb{R})}\leq&\frac{C_2}{\epsilon}q^{-\epsilon}(t),\label{I:h0} \\
\|\zeta_n(t)\|_{L^\infty(\mathbb{R})}\leq&\frac{C_2}{\epsilon}n^{n/\alpha}b^{n-1}q^{-\epsilon-n\sigma}(t)\label{I:hn}
\end{align}
for $n=1,2,\dots$ for all $t\in[0,T_2]$. We seek a contradiction.

\subsection*{Proof of \eqref{claim:h0}}
We integrate \eqref{e:h0} over the interval $[0,T_2]$ to show that 
\begin{align*}
|\zeta_0(T_2;x)|\leq&\|\eta_0\|_{L^\infty}+\int^{T_2}_0|v_1(t;x)|~dt\\
<&\frac{1}{2\epsilon}+C_1\int^{T_2}_0q^{-1}(t)~dt \\
\leq&\frac{C_2}{2\epsilon}-C_1\frac{1}{(1-\epsilon)^2}\frac{1}{m(0)}
\Big(\log\frac{1}{1-\epsilon}-\log q(T_2)\Big)\\
<&\frac{C_2}{2\epsilon}+2\Big(\frac{1+\epsilon}{1-\epsilon}\Big)^2\frac1\epsilon q^{-\epsilon}(T_2) \\
\leq&\frac{C_2}{2\epsilon}q^{-\epsilon}(T_2)
+\frac2\epsilon\Big(\frac{1+\epsilon}{1-\epsilon}\Big)^2q^{-\epsilon}(T_2)\\
<&\frac{C_2}{\epsilon}q^{-\epsilon}(T_2)
\end{align*}
for all $x\in\mathbb{R}$. Therefore \eqref{claim:h0} holds throughout the interval $[0,T_2]$. Here the second inequality uses \eqref{A:h0} and \eqref{I:v1}, the third inequality uses that $C_2>1$ and \eqref{I:s=1}, the fourth inequality uses that 
\[
\log\frac{1}{1-\epsilon}<2\epsilon\quad\text{and}\quad-\log x<\frac1\epsilon x^{-\epsilon}
\] 
throughout $0<x<1$ for all $0<\epsilon<1$, by direct calculations, and Lemma~\ref{lem:q}. Moreover, we assume, without loss of generality, that $\|u_0'\|_{L^\infty}=-m(0)$. The fifth inequality uses Lemma~\ref{lem:q}, and the last inequality uses \eqref{A:m2}. Indeed, 
\[
\Big(\frac{1-\epsilon}{1+\epsilon}\Big)^2(-m(0))^{3/4}>4.
\]

\subsection*{Proof of \eqref{claim:hn}}
We gather some more preliminaries. 

For $n\geq 1$, let
\begin{equation}\label{def:T3}
v_1(T_3;x)=m(T_3)\quad\text{and}\quad 
v_1(t;x)\leq \frac{1}{(1+\epsilon)^{1/(1+\epsilon+n\sigma)}}m(t)
\end{equation}
for all $t\in[T_3,T_2]$ for some $T_3\in (0,T_2)$ and for some $x\in\mathbb{R}$. Indeed, since $v_1$ and $m$ are uniformly continuous throughout the interval $[0,T_2]$, we may choose $T_3$ sufficiently close to $T_2$ so that the latter inequality of \eqref{def:T3} holds for all $t\in[T_3,T_2]$ for $\epsilon>0$ sufficiently small. We repeat the argument in the proof of Lemma~\ref{lem:q} to find that 
\begin{equation}\label{I:dr/dt'}
(1+\epsilon)m(0)\leq \frac{dr}{dt}\leq (1-\epsilon)m(0)
\end{equation}
throughout the interval $(T_3,T_2)$ for some $\epsilon>0$ sufficiently small and
\begin{equation}\label{I:qr'}
q(t)\leq r(t)\leq (1+\epsilon)^{1/(1+\epsilon+n\sigma)}q(t)
\end{equation}
for all $t\in[T_3,T_2]$. Moreover we calculate that 
\begin{align}
\int^{T_2}_{T_3}q^{-1-\epsilon-n\sigma}(t)~dt
&\leq (1+\epsilon)\int^{T_2}_{T_3} r^{-1-\epsilon-n\sigma}(t)~dt\notag\\
&\leq \frac{1+\epsilon}{1-\epsilon}\frac{1}{m(0)}
\int^{T_2}_{T_3} r^{-1-\epsilon-n\sigma}(t)\frac{dr}{dt}(t)~dt \notag \\
&=-\frac{1}{\epsilon+n\sigma}\frac{1+\epsilon}{1-\epsilon}\frac{1}{m(0)}
(r^{-\epsilon-n\sigma}(T_2)-r^{-\epsilon-n\sigma}(T_3))\notag \\
&\leq-\frac{1}{\epsilon+n\sigma}\frac{1+\epsilon}{1-\epsilon}\frac{1}{m(0)}
(q^{-\epsilon-n\sigma}(T_2)-q^{-\epsilon-n\sigma}(T_3)). \label{I:s>1h}
\end{align}
It offers a refinement over \eqref{I:s>1} when $T_3$ and $T_2$ are sufficiently close. Here the first inequality uses \eqref{I:qr'}, the second inequality uses \eqref{I:dr/dt'}, and the last inequality uses \eqref{I:qr'} and \eqref{def:T3}. 

\subsection*{Proof of \eqref{claim:hn} for $n=1$}
Let $|\zeta_1(T_2;x_1)|=\max_{x\in\mathbb{R}}|\zeta_1(T_2;x)|$. We may assume, without loss of generality, that $\zeta_1(T_2;x_1)>0$. Since $\zeta_1$ is uniformly continuous throughout the interval $[0,T_2]$, we may choose $T_3$ close to $T_2$ so that 
\begin{equation}\label{I:h1>0}
\zeta_1(t;x_1)\geq0\qquad \text{for all $t\in[T_3,T_2]$}.
\end{equation}
Moreover, we may choose $T_3$ closer to $T_2$, if necessary, so that \eqref{def:T3} and \eqref{I:s>1h} hold throughout the interval $[T_3,T_2]$. Note from \eqref{e:hn} that
\begin{align*}
\frac{d\zeta_1}{dt}(t;x_1)=&-v_1(t;x_1)\zeta_1(t;x_1)-v_2(t;x_1) \\
\leq&-m(0)\frac{C_2}{\epsilon}q^{-1}(t)q^{-\epsilon-\sigma}(t)+C_22^{1/\alpha+1}bq^{-1-\sigma}(t) \\
\leq&(-m(0)+2^{1/\alpha+1}b\epsilon)\frac{C_2}{\epsilon}q^{-1-\epsilon-\sigma}(t)
\end{align*}
for all $t\in(T_3,T_2)$. Here the first inequality uses \eqref{def:q}, \eqref{I:h1>0}, \eqref{I:hn} and \eqref{I:vn}, and the second inequality uses Lemma~\ref{lem:q}. We then integrate it over the interval $[T_3,T_2]$ to show that 
\begin{align*}
\zeta_1(T_2;x_1)\leq&\zeta_1(T_3;x_1)+
(-m(0)+2^{1/\alpha+1}b\epsilon)\frac{C_2}{\epsilon}\int^{T_2}_{T_3}q^{-1-\epsilon-\sigma}(t)~dt\\
\leq&\frac{C_2}{\epsilon}q^{-\epsilon-\sigma}(T_3)-(-m(0)+2^{1/\alpha+1}b\epsilon) \\
&\hspace{70pt}\times\frac{1}{\epsilon+\sigma}\frac{1+\epsilon}{1-\epsilon}\frac{1}{m(0)}
\frac{C_2}{\epsilon}(q^{-\epsilon-\sigma}(T_2)-q^{-\epsilon-\sigma}(T_3)) \\
<&\frac{C_2}{\epsilon}q^{-\epsilon-\sigma}(T_3)
+\frac{1+\epsilon}{\epsilon+\sigma}\frac{1+\epsilon}{1-\epsilon}
\frac{C_2}{\epsilon}(q^{-\epsilon-\sigma}(T_2)-q^{-\epsilon-\sigma}(T_3)) \\
=&\Big(1-\frac{1}{\epsilon+\sigma}\frac{(1+\epsilon)^2}{1-\epsilon}\Big)
\frac{C_2}{\epsilon}q^{-\epsilon-\sigma}(T_3)
+\frac{1}{\epsilon+\sigma}\frac{(1+\epsilon)^2}{1-\epsilon}
\frac{C_2}{\epsilon}q^{-\epsilon-\sigma}(T_2) \\
<&\frac{C_2}{\epsilon}q^{-\epsilon-\sigma}(T_2).
\end{align*}
Therefore \eqref{claim:hn} holds for $n=1$ throughout the interval $[0,T_2]$. Here the second inequality uses \eqref{I:hn} and \eqref{I:s>1h}, the third inequality uses \eqref{A:m1}. Indeed, 
\[
-m(0)>2^{1/\alpha+1}b\qquad \text{if $\epsilon<2^{-1/\alpha-1}b^{-1}$.} 
\]
The last inequality uses \eqref{I:sigma} and Lemma~\ref{lem:q}. Indeed, 
\[
0<\frac{1}{\epsilon+\sigma}\frac{(1+\epsilon)^2}{1-\epsilon}<1
\]
for $\epsilon>0$ sufficiently small.

\subsection*{Proof of \eqref{claim:hn} for $n\geq2$}
We establish one more preliminary.

\begin{lemma}\label{lem:Stirling}
For $n\geq 2$,
\begin{equation}\label{sum:h}
\sum_{j=2}^n\left(\begin{matrix} n\\j\end{matrix}\right)j^{(j-1)/\alpha+1}(n+1-j)^{(n+1-j)/\alpha}
<\frac{2e}{2^{1/\alpha-1}-1}n^{n/\alpha+1}.
\end{equation}
\end{lemma}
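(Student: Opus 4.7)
The plan is to estimate each summand via Stirling's formula, reducing the whole sum to a convergent polynomial-decay series in $j$, and then to verify the numerical constant.

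I would first apply the two-sided Stirling bounds $\sqrt{2\pi k}(k/e)^k\leq k!\leq e\sqrt{k}(k/e)^k$ to the factorials arising after rewriting $j^{j-1}$, $(n+1-j)^{n+1-j}$, and $n^n$ in terms of $j!$, $(n+1-j)!$, and $n!$. Combined with the elementary identity $j!(n+1-j)!/n!=(n+1-j)/\binom{n}{j}$, this yields, for $2\leq j\leq n$,
\[
\frac{j^{j-1}(n+1-j)^{n+1-j}}{n^n}\ \leq\ \frac{e^{2}\sqrt{n(n+1-j)}}{2\pi\,j^{3/2}\binom{n}{j}}.
\]
Raising to the $1/\alpha$-power and multiplying by $\binom{n}{j}\cdot j/n$, the normalized $j$-th summand
\[
R_j:=\frac{\binom{n}{j}j^{(j-1)/\alpha+1}(n+1-j)^{(n+1-j)/\alpha}}{n^{n/\alpha+1}}
\]
then satisfies
\[
R_j\ \leq\ \binom{n}{j}^{\,1-1/\alpha}\,\frac{j}{n}\,\left(\frac{e^{2}\sqrt{n(n+1-j)}}{2\pi\,j^{3/2}}\right)^{1/\alpha}.
\]
Because $\alpha<1$ gives $1-1/\alpha<0$ and because $\binom{n}{j}\geq n$ for $2\leq j\leq n-1$, this collapses, after using $(n+1-j)/n\leq 1$, to
\[
R_j\ \leq\ \left(\frac{e^{2}}{2\pi}\right)^{1/\alpha}j^{\,1-3/(2\alpha)}\qquad(2\leq j\leq n-1).
\]
The boundary case $j=n$ is handled directly: $R_n=n^{-1/\alpha}\leq 1$.

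Second, the assumption $\alpha<\tfrac{2}{3}(1-14\epsilon)<\tfrac{2}{3}$ forces $1-3/(2\alpha)<-5/4<-1$, making the series $\sum_{j\geq 2}j^{1-3/(2\alpha)}$ convergent. A dyadic decomposition $j\in[2^k,2^{k+1})$ gives
\[
\sum_{j\geq 2}j^{\,1-3/(2\alpha)}\ \leq\ \frac{1}{2^{\,3/(2\alpha)-2}-1},
\]
so that
\[
\sum_{j=2}^n R_j\ \leq\ \frac{(e^{2}/(2\pi))^{1/\alpha}}{2^{\,3/(2\alpha)-2}-1}+1.
\]
A final one-variable calculus check confirms this constant is strictly smaller than $2e/(2^{1/\alpha-1}-1)$ on $\alpha\in(\tfrac12,\tfrac23)$; multiplying back by $n^{n/\alpha+1}$ completes the proof.

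The main obstacle is the constant bookkeeping: the Stirling factor $(e^2/(2\pi))^{1/\alpha}$ and the geometric ratio $2^{3/(2\alpha)-2}$ naturally emerging from this proof do not coincide with the cleaner constants $2e$ and $2^{1/\alpha-1}$ of the statement (they agree at $\alpha=1/2$ and diverge for $\alpha>1/2$), so the final comparison requires an elementary but careful numerical estimate over the admissible range.
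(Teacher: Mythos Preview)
Your argument is sound and yields the inequality, but it takes a somewhat different route from the paper's. The paper uses only the crude entropy bound $\binom{n}{j}\leq n^{n}/(j^{j}(n-j)^{n-j})$ and then factors each summand (normalized by $n^{n/\alpha+1}$) as
\[
\Big(\frac{n+1-j}{n-j}\Big)^{n-j}\,\frac{n+1-j}{n}\,\Big(\frac{j^{j-1}(n+1-j)^{n+1-j}}{n^{n}}\Big)^{1/\alpha-1}.
\]
The first factor is bounded by $e$, the second by $1$, and after pairing $j$ with $n+1-j$ (reducing to $2\leq j\leq\lfloor n/2\rfloor$ at the cost of a factor $2$) the last factor is controlled by $(1/2)^{(j-1)(1/\alpha-1)}$; summing the resulting geometric series gives exactly $2e/(2^{1/\alpha-1}-1)$. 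By contrast, you keep the $\sqrt{2\pi k}$ prefactors from Stirling, throw away $\binom{n}{j}\geq n$, and end up with a polynomial-decay series $\sum j^{1-3/(2\alpha)}$, a different constant $(e^{2}/(2\pi))^{1/\alpha}$, and a separate treatment of $j=n$. Both arguments exploit $1/\alpha>1$ to make the tail summable, but the paper's bookkeeping is tailored to produce the stated constant directly, whereas yours requires the extra one-variable comparison you flag as the main obstacle. That comparison does hold on $(\tfrac12,\tfrac23)$, so your proof goes through; but since the lemma is purely auxiliary, the paper's more direct accounting is the cleaner choice.
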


The proof uses Stirling's inequality. We include the detail in Appendix~\ref{sec:appendix}.

\

For $n\geq 2$, let $|\zeta_n(T_2;x_n)|=\max_{x\in\mathbb{R}}|\zeta_n(T_2;x)|$. We may assume, without loss of generality, that $\zeta_n(T_2;x_n)>0$. We may choose $T_3$ close to $T_2$ so that  
\begin{equation}\label{I:hn>0}
\zeta_n(t;x_n)\geq0\qquad \text{for all $t\in[T_3,T_2]$}.
\end{equation}
Moreover, we may choose $T_3$ closer to $T_2$, if necessary, so that \eqref{I:s>1h} holds throughout the interval $[T_3,T_2]$. Note from \eqref{e:hn} that
\begin{align*}
\frac{d\zeta_n}{dt}&(t;x_n)\\=&-nv_1(t;x_n)\zeta_n(t;x_n)
-\sum_{j=2}^n\left(\begin{matrix} n\\j\end{matrix}\right)v_j(t;x_n)\zeta_{n+1-j}(t;x_n)-v_{n+1}(t;x_n) \\
\leq&-nm(0)\frac{C_2}{\epsilon}n^{n/\alpha}b^{n-1}q^{-1}(t)q^{-\epsilon-n\sigma}(t)\\
&+\sum_{j=2}^n\left(\begin{matrix} n\\j\end{matrix}\right)\frac{C_2^2}{\epsilon}
j^{(j-1)/\alpha+1}(n+1-j)^{(n+1-j)/\alpha}b^{n-1}q^{-1-(j-1)\sigma}(t)q^{-\epsilon-(n+1-j)\sigma}(t)\\
&+C_2(n+1)^{n/\alpha+1}b^nq^{-1-n\sigma}(t) \\
\leq&-nm(0)\frac{C_2}{\epsilon}n^{n/\alpha}b^{n-1}q^{-1-\epsilon-n\sigma}(t) \\
&+\frac{2e}{2^{1/\alpha-1}-1}n^{n/\alpha+1}\frac{C_2^2}{\epsilon}b^{n-1}q^{-1-\epsilon-n\sigma}(t) 
+\epsilon\frac{C_2}{\epsilon}\Big(\frac{n+1}{n}\Big)^{n/\alpha}(n+1)n^{n/\alpha}b^nq^{-1-n\sigma}(t) \\
\leq&\Big(-m(0)n+\frac{2e}{2^{1/\alpha-1}-1}C_2n+e^{1/\alpha}b\epsilon(n+1)\Big)
\frac{C_2}{\epsilon}n^{n/\alpha}b^{n-1}q^{-1-\epsilon-n\sigma}(t)
\end{align*}
for all $t\in(T_3,T_2)$. Here the first inequality uses \eqref{def:q}, \eqref{I:hn>0}, \eqref{I:hn} and \eqref{I:vn}, the second inequality uses \eqref{sum:h}, and the last inequality uses Lemma~\ref{lem:q}. We then integrate it over the interval $[T_3,T_2]$ to show that 
\begin{align*}
\zeta_n(T_2&;x_n)\\ \leq& \zeta_n(T_3;x_n)\\
&+\Big(-m(0)n+\frac{2e}{2^{1/\alpha-1}-1}C_2n+e^{1/\alpha}b\epsilon(n+1)\Big)
\frac{C_2}{\epsilon}n^{n/\alpha}b^{n-1}\int^{T_2}_{T_3}q^{-1-\epsilon-n\sigma}(t)~dt \\
\leq&\frac{C_2}{\epsilon}n^{n/\alpha}b^{n-1}q^{-\epsilon-n\sigma}(T_3) \\
&-\Big(-m(0)n+\frac{2e}{2^{1/\alpha-1}-1}C_2n+e^{1/\alpha}b\epsilon(n+1)\Big) \\
&\hspace{20pt}\times \frac{1}{\epsilon+n\sigma}\frac{1+\epsilon}{1-\epsilon}\frac{1}{m(0)}
\frac{C_2}{\epsilon}n^{n/\alpha}b^{n-1}(q^{-\epsilon-n\sigma}(T_2)-q^{-\epsilon-n\sigma}(T_3))\\
<&\frac{C_2}{\epsilon}n^{n/\alpha}b^{n-1}q^{-\epsilon-n\sigma}(T_3) \\
&+\frac{n+\epsilon n+\epsilon(n+1)}{\epsilon+n\sigma}\frac{1+\epsilon}{1-\epsilon}
\frac{C_2}{\epsilon}n^{n/\alpha}b^{n-1}(q^{-\epsilon-n\sigma}(T_2)-q^{-\epsilon-n\sigma}(T_3))\\
\leq&\Big(1-\frac{2+5\epsilon}{2\sigma+\epsilon}\frac{1+\epsilon}{1-\epsilon}\Big)
\frac{C_2}{\epsilon}n^{n/\alpha}b^{n-1}q^{-\epsilon-n\sigma}(T_3)
+\frac{2+5\epsilon}{2\sigma+\epsilon}\frac{1+\epsilon}{1-\epsilon}
\frac{C_2}{\epsilon}n^{n/\alpha}b^{n-1}q^{-\epsilon-n\sigma}(T_2) \\
<&\frac{C_2}{\epsilon}n^{n/\alpha}b^{n-1}q^{-\epsilon-n\sigma}(T_2).
\end{align*}
Therefore \eqref{claim:hn} holds for $n=2,3,\dots$ throughout the interval $[0,T_2]$. Here the second inequality uses \eqref{I:hn} and \eqref{I:s>1h}, the third inequality uses \eqref{A:m1} and \eqref{A:m2}. Indeed,
\[
\epsilon(-m(0))^{1/4}>\frac{2e}{2^{1/\alpha-1}-1}
\quad\text{and}\quad -m(0)>e^{1/\alpha}b
\]
if $0<\epsilon<e^{-1/\alpha}b^{-1}$. The fourth inequality uses \eqref{I:sigma} and that ${\displaystyle \frac{(1+2\epsilon)n+\epsilon}{n\sigma+\epsilon}}$ decreases in $n$ for $n\geq 2$, by a direct calculation. The last inequality uses \eqref{I:sigma} and Lemma~\ref{lem:q}. Indeed,
\[
0<\frac{2+5\epsilon}{2\sigma+\epsilon}\frac{1+\epsilon}{1-\epsilon}<1
\]
for $\epsilon>0$ sufficiently small, by a direct calculation.

\subsection*{Proof of \eqref{claim:v0}}
Recall \eqref{e:Kn}. We choose $\delta(t)=q^\sigma(t)$ and use \eqref{I:h0}, \eqref{I:hn} to calculate that 
\begin{equation} \label{I:K0}
|K_0(t;x)|\leq\frac{40}{\pi}\frac1\epsilon\Big(\frac{C_2}{\epsilon}q^{-\sigma\epsilon}(t)q^{-\epsilon}(t)
+\frac{C_2}{\epsilon}q^{\sigma-\sigma\epsilon}(t)q^{-\epsilon-\sigma}(t)\Big) 
=\frac{40}{\pi}\frac{C_2}{\epsilon^2}q^{-\sigma\epsilon-\epsilon}(t)
\end{equation}
for all $t\in[0,T_2]$ for all $x\in\mathbb{R}$. We then integrate \eqref{e:v0} over the interval $[0,T_2]$ to show that
\begin{align*}
|v_0(T_2;x)|\leq&\|u_0\|_{L^\infty}+\int^{T_2}_0|K_0(t;x)|~dt \\
<&\frac12C_0+\frac{40}{\pi}\frac{C_2}{\epsilon^2}\int^{T_2}_0q^{-\sigma\epsilon-\epsilon}(t)~dt \\
\leq&\frac12C_0-\frac{40}{\pi}\frac{C_2}{\epsilon^2}
\frac{1}{1-\sigma\epsilon-\epsilon}\frac{1}{(1-\epsilon)^{1+\sigma\epsilon+\epsilon}}\frac{1}{m(0)}
\Big(\frac{1}{(1-\epsilon)^{1-\sigma\epsilon-\epsilon}}-q^{1-\sigma\epsilon-\epsilon}(T_2)\Big)\\
<&\frac12C_0-\frac{40}{\pi}\frac{1}{1-\sigma\epsilon-\epsilon}\frac{1}{(1-\epsilon)^2}
\frac{1}{\epsilon^2}\frac{C_2}{m(0)}\\
<&\frac12C_0+\frac12(-m(0))\\
<&C_0
\end{align*}
for all $x\in\mathbb{R}$. Therefore \eqref{claim:v0} holds throughout the interval $[0,T_2]$. Here the second inequality uses \eqref{def:C} and \eqref{I:K0}, the third inequality uses \eqref{I:s>1} and that $\sigma\epsilon+\epsilon<1$ for $\epsilon>0$ sufficiently small, the fourth inequality uses Lemma~\ref{lem:q}, and the fifth inequality uses \eqref{A:m3}. Indeed,
\[
\epsilon^2(1-\epsilon)^2(1-\sigma\epsilon-\epsilon)(-m(0))^{5/4}>\frac{80}{\pi}
\]
for $\epsilon>0$ sufficiently small. The last inequality uses \eqref{def:C}.

\subsection*{Proof of \eqref{claim:v1}}
For $n\geq 1$, use \eqref{e:Kn}, where $\delta(t)=n^{-1/\alpha}q^\sigma(t)$, and \eqref{I:hn} to calculate that 
\begin{align}
|K_n(t;x)|\leq&\frac{40}{\pi}\frac1\epsilon
\Big(n^{\epsilon/\alpha}\frac{C_2}{\epsilon}n^{n/\alpha}b^{n-1}
q^{-\sigma\epsilon}(t)q^{-\epsilon-n\sigma}(t) \notag \\
&\hspace{30pt}+n^{\epsilon/\alpha-1/\alpha}\frac{C_2}{\epsilon}(n+1)^{(n+1)/\alpha}b^n
q^{\sigma-\sigma\epsilon}(t)q^{-\epsilon-(n+1)\sigma}(t)\Big) \notag \\
=&\frac{40}{\pi}\Big(1+\Big(\frac{n+1}{n}\Big)^{n/\alpha+1/\alpha}b\Big)
\frac{C_2}{\epsilon^2}n^{n/\alpha+\epsilon/\alpha}b^{n-1}q^{-\sigma\epsilon-\epsilon-n\sigma}(t)\notag \\
<&\frac{40}{\pi}(1+(2e)^{1/\alpha}b)
\frac{C_2}{\epsilon^2}n^{(n-1)/\alpha+2}b^{n-1}q^{-1-\sigma\alpha-(n-1)\sigma}(t)\label{I:Kn}
\end{align}
for all $t\in[0,T_2]$ for all $x\in\mathbb{R}$. Here the last inequality uses \eqref{I:sigma}. Indeed, 
\[
n/\alpha+\epsilon/\alpha<(n-1)/\alpha+2\quad\text{and}\quad
\sigma\epsilon+\epsilon+n\sigma<1+\sigma\alpha+(n-1)\sigma
\] 
for $n\geq 1$ an integer.

Suppose for now that $v_1(T_2;x)\geq0$. Note from \eqref{e:vn} that 
\[
\frac{dv_1}{dt}(t;x)=-v_1^2(t;x)-K_1(t;x)\leq|K_1(t;x)|
\]
for all $t\in(0,T_2)$ for all $x\in\mathbb{R}$. We then integrate it over the interval $[0,T_2]$ to show that
\begin{align*}
v_1(T_2;x)\leq&\|u_0'\|_{L^\infty}+\int^{T_2}_0|K_1(t;x)|~dt \\
\leq&\frac12C_1+\frac{40}{\pi}(1+(2e)^{1/\alpha}b)\frac{C_2}{\epsilon^2}\int^{T_2}_0q^{-2}(t)~dt \\
\leq&\frac12C_1-\frac{40}{\pi}(1+(2e)^{1/\alpha}b)\frac{C_2}{\epsilon^2}
\frac{1}{(1-\epsilon)^3}\frac{1}{m(0)}(q^{-1}(T_2)-(1-\epsilon)) \\
<&\frac12C_1
-\frac{40}{\pi}(1+(2e)^{1/\alpha}b)\frac{C_2}{\epsilon^2}\frac{1}{(1-\epsilon)^3}\frac{1}{m(0)}q^{-1}(T_2)\\
<&\frac12C_1q^{-1}(T_2)-\frac12m(0)q^{-1}T_2 \\
<&C_1q^{-1}(t).
\end{align*}
The second inequality uses \eqref{def:C} and \eqref{I:Kn}, \eqref{I:sigma}, Lemma~\ref{lem:q}. Indeed, $\sigma\alpha<1-10\epsilon$. The third inequality uses \eqref{I:s>1}, and the fifth inequality uses Lemma~\ref{lem:q} and \eqref{A:m3}. Indeed, 
\[
\epsilon^2(1-\epsilon)^3(-m(0))^{5/4}>\frac{80}{\pi}(1+(2e)^{1/\alpha}b)
\]
for $\epsilon>0$ sufficiently small. The last inequality uses \eqref{def:C}. 

Suppose on the contrary that $v_1(T_2;x)<0$. We may assume, without loss of generality, that $\|u_0'\|_{L^\infty}=-m(0)$. We then infer from \eqref{def:q} and \eqref{def:C} that 
\[
v_1(T_2;x)\geq m(T_2)=m(0)q^{-1}(T_2)>-C_1q^{-1}(T_2).
\]
Therefore \eqref{claim:v1} holds throughout the interval $[0,T_2]$.

\subsection*{Proof of \eqref{claim:vn} for $n\geq 3$} We gather some more preliminaries.

For $n\geq 2$, abusing notation, let 
\[
v_1(T_3;x)=m(T_3)\quad\text{and}\quad
v_1(t;x)\leq \frac{1}{(1+\epsilon)^{1/(2+(n-1)\sigma)}}m(t)
\]
for all $t\in[T_3,T_2]$ for some $T_3\in (0,T_2)$ and for some $x\in\mathbb{R}$. Indeed, since $v_1$ and $m$ are uniformly continuous throughout the interval $[0,T_2]$, we may choose $T_3$ sufficiently close to $T_2$ so that the latter inequality holds for all $t\in[T_3,T_2]$. We repeat the argument in the proof of \eqref{I:s>1h} to show that 
\begin{multline}\label{I:s>1u}
\int^{T_2}_{T_3}q^{-2-(n-1)\sigma}(t)~dt\\
\leq-\frac{1}{1+(n-1)\sigma}\frac{1+\epsilon}{1-\epsilon}\frac{1}{m(0)}
(q^{-2-(n-1)\sigma}(T_2)-q^{-2-(n-1)\sigma}(T_3)).
\end{multline}
For $n\geq 3$, moreover, we repeat the argument in the proof of Lemma~\ref{lem:Stirling} to show that 
\begin{equation}\label{sum:u}
\sum_{j=2}^{n-1}\left(\begin{matrix} n\\j\end{matrix}\right)j^{(j-1)/\alpha+1}(n+1-j)^{(n-j)/\alpha+1}
\leq \frac{2e}{2^{1/\alpha-1}-1}n^{(n-1)/\alpha+2}.
\end{equation}

\

For $n\geq 3$, abusing notation, let $|v_n(T_2;x_n)|=\max_{x\in\mathbb{R}}|v_n(T_2;x)|$. We may assume, without loss of generality, that $v_n(T_2;x_n)>0$. Since $v_n$ is uniformly continuous throughout the interval $[0,T_2]$, we may choose $T_3$ sufficiently close to $T_2$ so that 
\begin{equation}\label{I:vn>0}
v_n(t;x_n)\geq 0 \qquad\text{for all $t\in[T_3,T_2]$}
\end{equation}
and \eqref{I:s>1u} holds. Note from \eqref{e:vn} that 
\begin{align*}
\frac{dv_n}{dt}(t;x_n)=&-(n+1)v_1(t;x_n)v_n(t;x_n)
-\sum_{j=2}^{n-1}\left(\begin{matrix} n\\j\end{matrix}\right)v_j(t;x_n)v_{n+1-j}(t;x_n)-K_n(t;x_n) \\
\leq&-(n+1)m(0)C_2n^{(n-1)/\alpha+1}b^{n-1}q^{-1}(t)q^{-1-(n-1)\sigma}(t)\\
&+\sum_{j=2}^{n-1}\left(\begin{matrix} n\\j\end{matrix}\right)C_2^2
j^{(j-1)/\alpha+1}(n+1-j)^{(n-j)/\alpha+1}b^{n-1}q^{-1-(j-1)\sigma}(t)q^{-1-(n-j)\sigma}(t)\\
&+|K_n(t;x_n)| \\
\leq&-(n+1)m(0)C_2n^{(n-1)/\alpha+1}b^{n-1}q^{-2-(n-1)\sigma}(t) \\
&+C_2^2\frac{2e}{2^{1/\alpha-1}-1}n^{(n-1)/\alpha+2}b^{n-1}q^{-2-(n-1)\sigma}(t)\\
&+\frac{40}{\pi}(1+(2\epsilon)^{1/\alpha}b)\frac{C_2}{\epsilon^2}
n^{(n-1)/\alpha+2}b^{n-1}q^{-1-\sigma\alpha-(n-1)\sigma}(t)\\
\leq&\Big(-m(0)(n+1)+\frac{2e}{2^{1/\alpha-1}-1}C_2n
+\frac{40}{\pi}(1+(2\epsilon)^{1/\alpha}b)\frac{1}{\epsilon^2}n\Big)\\
&\hspace*{150pt}\times C_2n^{(n-1)/\alpha+1}b^{n-1}q^{-2-(n-1)\sigma}(t)
\end{align*}
for all $t\in(T_3,T_2)$. Here the first inequality uses \eqref{def:q}, \eqref{I:vn>0} and \eqref{I:vn}, the second inequality uses \eqref{sum:u} and \eqref{I:Kn}, and the last inequality uses Lemma~\ref{lem:q} and \eqref{I:sigma}. Indeed, $\sigma\alpha<1-10\epsilon$.
We then integrate it over the interval $[T_3,T_2]$ to show that 
\begin{align*}
v_n(T_2;x_n)\leq&v_n(T_3;x_n)\\&+
\Big(-m(0)(n+1)+\frac{2e}{2^{1/\alpha-1}-1}C_2n
+\frac{40}{\pi}(1+(2\epsilon)^{1/\alpha}b)\frac{1}{\epsilon^2}n\Big)\\
&\hspace*{120pt}\times C_2n^{(n-1)/\alpha+1}b^{n-1}\int^{T_2}_{T_3}q^{-2-(n-1)\sigma}(t)~dt\\
\leq& C_2n^{(n-1)/\alpha+1}b^{n-1}q^{-1-(n-1)\sigma}(T_3)\\
&-\Big(-m(0)(n+1)+\frac{2e}{2^{1/\alpha-1}-1}C_2n
+\frac{40}{\pi}(1+(2\epsilon)^{1/\alpha}b)\frac{1}{\epsilon^2}n\Big)\\
&\hspace{83pt}\times\frac{1}{1+(n-1)\sigma}\frac{1+\epsilon}{1-\epsilon}\frac{1}{m(0)}
C_2n^{(n-1)/\alpha+1}b^{n-1}\\ &\hspace{150pt}\times(q^{-1-(n-1)\sigma}(T_2)-q^{-1-(n-1)\sigma}(T_3))\\
<&C_2n^{(n-1)/\alpha+1}b^{n-1}q^{-1-(n-1)\sigma}(T_3)\\
&+\frac{n+1+\epsilon n}{1+(n-1)\sigma}\frac{1+\epsilon}{1-\epsilon}
C_2n^{(n-1)/\alpha+1}b^{n-1}(q^{-1-(n-1)\sigma}(T_2)-q^{-1-(n-1)\sigma}(T_3))\\
<&C_2n^{(n-1)/\alpha+1}b^{n-1}q^{-1-(n-1)\sigma}(T_2).
\end{align*}
Therefore \eqref{claim:vn} holds for $n=3,4,\dots$ throughout the interval $[0,T_2]$. Here the second inequality uses \eqref{I:vn} and \eqref{I:s>1u}, the third inequality uses \eqref{A:m2} and \eqref{A:m3}. Indeed, 
\[
-\epsilon m(0)>\frac{2e}{2^{1/\alpha-1}-1}C_2+\frac{40}{\pi}(1+(2e)^{1/\alpha}b)\frac{1}{\epsilon^2}
\]
for $\epsilon>0$ sufficiently small. The last inequality uses that ${\displaystyle \frac{(1+\epsilon)n+1}{n\sigma+1-\sigma}}$ decreases in $n\geq 3$, by a direct calculation, and \eqref{I:sigma}, Lemma~\ref{lem:q}. Indeed,
\[
0<\frac{4+3\epsilon}{2\sigma+1}\frac{1+\epsilon}{1-\epsilon}<1
\]
for $\epsilon>0$ sufficiently small.

\subsection*{Proof of \eqref{claim:vn} for $n=2$}
Abusing notation, let $|v_2(T_2;x_2)|=\max_{x\in\mathbb{R}}|v_2(T_2;x)|$. We may assume, without loss of generality, that $v_2(T_2;x_2)>0$. We may choose $T_3$ close to $T_2$ so that 
\begin{equation}\label{I:v2>0}
v_2(t;x_2)\geq 0 \qquad\text{for all $t\in[T_3,T_2]$}.
\end{equation}
Moreover, we may choose $T_3$ closer to $T_2$, if necessary, so that \eqref{I:s>1u} holds.

Suppose for now that $x_2\not\in \Sigma_{1/3}(T_2)$. That is, $v_1(T_2;x_2)>\frac23m(T_2)$ (see \eqref{def:Sigma}). We may choose $T_3$ closer to $T_2$, if necessary, so that 
\begin{equation}\label{I:1/3}
v_1(t;x_2)\geq \frac23m(t)\qquad\text{for all $t\in[T_3,T_2]$}.
\end{equation}
Note from \eqref{e:vn} that 
\begin{align*}
\frac{dv_2}{dt}(t;x_2)=&-3v_1(t;x_2)v_2(t;x_2)-K_2(t;x_2) \\
\leq&-2m(0)C_22^{1/\alpha+1}bq^{-1}(t)q^{-1-\sigma}(t) \\
&+\frac{40}{\pi}(1+(2e)^{1/\alpha}b)\frac{C_2}{\epsilon^2}
2^{1/\alpha+2}bq^{-1-\sigma\alpha-\sigma}(t)\\
\leq&2\Big(-m(0)+\frac{40}{\pi}(1+(2e)^{1/\alpha}b)\frac{1}{\epsilon^2}\Big)
C_22^{1/\alpha+1}bq^{-2-\sigma}(t)
\end{align*}
for all $t\in(T_3,T_2)$. Here the first inequality uses \eqref{I:1/3}, \eqref{I:v2>0}, \eqref{def:q} and \eqref{I:Kn}, and the second inequality uses \eqref{I:sigma} and Lemma~\ref{lem:q}. Indeed, $\sigma\alpha<1-10\epsilon$. 
We then integrate it over the interval $[T_3,T_2]$ to show that 
\begin{align*}
v_2(T_2;x_2)\leq&v_2(T_3;x_2)\\ &+2\Big(-m(0)+\frac{40}{\pi}(1+(2e)^{1/\alpha}b)\frac{1}{\epsilon^2}\Big)
C_22^{1/\alpha+1}b\int^{T_2}_{T_3}q^{-2-\sigma}(t)~dt \\
\leq&C_22^{1/\alpha+1}bq^{-1-\sigma}(T_3) \\
&-2\Big(-m(0)+\frac{40}{\pi}(1+(2e)^{1/\alpha}b)\frac{1}{\epsilon^2}\Big) \\
&\hspace*{57pt}\times\frac{1}{1+\sigma}\frac{1+\epsilon}{1-\epsilon}\frac{1}{m(0)}
C_22^{1/\alpha+1}b(q^{-1-\sigma}(T_2)-q^{-1-\sigma}(T_3))\\
<&C_22^{1+1/\alpha}bq^{-1-\sigma}(T_3)
+\frac{2}{1+\epsilon}\frac{(1+\epsilon)^2}{1-\epsilon}
C_22^{1/\alpha+1}b(q^{-1-\sigma}(T_2)-q^{-1-\sigma}(T_3))\\
<&C_22^{1+1/\alpha}bq^{-1-\sigma}(T_2).
\end{align*}
Here the second inequality uses \eqref{I:vn} and \eqref{I:s>1u}, and the third inequality uses \eqref{A:m3}. Indeed,
\[
-\epsilon^3m(0)>\frac{40}{\pi}(1+(2e)^{1/\alpha}b)
\]
for $\epsilon>0$ sufficiently small. The last inequality uses \eqref{I:sigma} and Lemma~\ref{lem:q}. Indeed,
\[
0<\frac{2}{1+\sigma}\frac{(1+\epsilon)^2}{1-\epsilon}<1
\]
for $\epsilon>0$ sufficiently small.

\

Suppose on the contrary that $x_2\in\Sigma_{1/3}(T_2)$. Lemma~\ref{lem:S} then dictates that 
\begin{equation}\label{I:2/3}
v_1(t;x_2)\leq\frac23m(t)<0\qquad \text{for all $t\in[0,T_2]$.}
\end{equation}
Differentiating \eqref{def:X} with respect to $x$ and using \eqref{def:vh_n}, we arrive at 
\begin{alignat}{2}
\frac{d}{dt}(\partial_xX)=&v_1(\partial_xX), \qquad &&(\partial_xX)(0;x)=1\label{e:X1}
\intertext{and}
\frac{d}{dt}(\partial_x^2X)=&v_2(\partial_xX)^2+v_1(\partial_x^2X), 
\qquad &&(\partial_x^2X)(0;x)=0, \label{e:X2} \\
\frac{d}{dt}(\partial_x^3X)=&v_3(\partial_xX)^3
+3v_2(\partial_xX)(\partial_x^2X)+v_1(\partial_x^3X), \qquad &&(\partial_x^3X)(0;x)=0.\label{e:X3}
\end{alignat}
An integration of \eqref{e:v0} leads to that 
\[
v_0(t;x)=\phi(x)-\int^t_0K_0(t;x)~dt.
\]
Differentiating it with respect to $x$ and using \eqref{def:vh_n}, we arrive at
\begin{align}
(v_2(\partial_xX)^2+v_1(\partial_x^2X))(t;x)\qquad \qquad=u_0''(x)-I_2(t;x),\label{e:v2I}\\
(v_3(\partial_xX)^3+3v_2(\partial_xX)(\partial_x^2X)+v_1(\partial_x^3X))(t;x)=u_0'''(x)-I_3(t;x), \label{e:v3I}
\end{align}
where
\begin{align}
I_2(t;x)=&\int^t_0(K_2(\partial_xX)^2+K_1(\partial_x^2X))(\tau;x)~d\tau,\label{def:I2} \\
I_3(t;x)=&\int^t_0(K_3(\partial_xX)^3+3K_2(\partial_xX)(\partial_x^2X)+K_1(\partial_x^3X))(\tau;x)~d\tau.
\label{def:I3}
\end{align}
Moreover, note from \eqref{e:X3} and \eqref{e:v3I} that 
\begin{equation}\label{e:X3'}
\frac{d}{dt}(\partial_x^3X)(\cdot\;;x)=u_0'''(x)-I_3(\cdot\;;x),\qquad (\partial_x^3X)(0;x)=0.
\end{equation}

We claim that 
\begin{equation}\label{I:X1}
\frac12q^{1+2\epsilon}(t)\leq (\partial_xX)(t;x_2)\leq 2q^{1-\epsilon}(t)\qquad\text{for all $t\in[0,T_2]$}.
\end{equation}
Indeed, note from \eqref{def:X}, \eqref{e:X1} and \eqref{def:r}, \eqref{I:dr/dt} that 
\[
\frac{1}{1-\epsilon}\frac{dr/dt}{r}\leq 
\frac{d(\partial_xX)/dt}{\partial_xX}\leq\frac{1}{1+\epsilon}\frac{dr/dt}{r}
\]
throughout the interval $(0,T_2)$. We then integrate it and use \eqref{e:X1} to show that 
\[
\Big(\frac{r(t)}{r(0)}\Big)^{1/(1-\epsilon)}\leq (\partial_xX)(t;x_2)\leq \Big(\frac{r(t)}{r(0)}\Big)^{1/(1+\epsilon)}.
\]
The claim therefore follows from \eqref{I:qr}. 

To proceed, we claim that
\begin{align}
|(\partial_x^2X)(t;x_2)|<&-\frac{2^{1/\alpha+4}}{m(0)}C_2bq^{2-\sigma-2\epsilon}(t) \label{claim:X2}
\intertext{and}
|(\partial_x^3X)(t;x_2)|<&\frac{\epsilon}{m^2(0)}C_2^2b^2q^{3-2\sigma+7\epsilon}(t)\label{claim:X3}
\end{align}
for all $t\in[0,T_2]$. 
It follows from \eqref{e:X2} and \eqref{e:X3} that \eqref{claim:X2} and \eqref{claim:X3} hold at $t=0$.  Suppose on the contrary that \eqref{claim:X2} and \eqref{claim:X3} hold throughout the interval $[0,T_4)$, but one of them fails at $t=T_4$ for some $T_4\in(0,T_2]$. By continuity, we may assume that 
\begin{align}
|(\partial_x^2X)(t;x_2)|\leq &-\frac{2^{1/\alpha+4}}{m(0)}C_2bq^{2-\sigma-2\epsilon}(t) \label{I:X2}
\intertext{and}
|(\partial_x^3X)(t;x_2)|\leq &\frac{\epsilon}{m^2(0)}C_2^2b^2q^{3-2\sigma+7\epsilon}(t) \label{I:X3}
\end{align}
for all $t\in[0,T_4]$. We seek a contradiction.

\subsection*{Proof of \eqref{claim:X2}}We recall \eqref{def:I2} and calculate that 
\begin{align}
|I_2(t;x_2)|\leq& \frac{40}{\pi}(1+(2e)^{1/\alpha}b)\frac{C_2}{\epsilon^2}
\int^t_0\Big(4\cdot2^{1/\alpha+2}b
q^{-1-\sigma\alpha-\sigma}(\tau)q^{2-2\epsilon}(\tau) \\
&\hspace*{110pt}-\frac{2^{1/\alpha+4}}{m(0)}C_2b
q^{-1-\sigma\alpha}(\tau)q^{2-\sigma-2\epsilon}(\tau)\Big)~d\tau \notag\\
\leq&\frac{40}{\pi}(1+(2e)^{1/\alpha}b)\frac{C_2}{\epsilon^2}2^{1/\alpha+4}b\Big(1-\frac{C_2}{m(0)}\Big)
\int^t_0 q^{-\sigma+8\epsilon}(\tau)~d\tau\notag\\
\leq&-\frac{40}{\pi}(1+(2e)^{1/\alpha}b)\frac{C_2}{\epsilon^2}2^{1/\alpha+5}b\notag \\
&\quad\times\frac{1}{\sigma-1-8\epsilon}\frac{1}{(1-\epsilon)^{\sigma+1-8\epsilon}}\frac{1}{m(0)}
(q^{1-\sigma+8\epsilon}(t)-(1-\epsilon)^{\sigma-1-8\epsilon}) \notag\\
<&\epsilon C_2bq^{1-\sigma+8\epsilon}(t) \label{I:I2}
\end{align}
for all $t\in[0,T_4]$. Here the first inequality uses \eqref{I:Kn}, \eqref{I:X1} and \eqref{I:X2}, and the second inequality uses Lemma~\ref{lem:q} and \eqref{I:sigma}. Indeed, 
\[
\sigma+\sigma\alpha+2\epsilon-1<\sigma-8\epsilon.
\] 
The third inequality uses \eqref{def:C} and \eqref{I:s>1}, and the last inequality uses Lemma~\ref{lem:q}, \eqref{I:sigma} and \eqref{A:m3}. Indeed,
\[
-\epsilon^3(1-\epsilon)^{\sigma+1-8\epsilon}m(0)>
\frac{1}{\sigma-1-8\epsilon}\frac{80}{\pi}(1+(2e)^{1/\alpha}b)2^{1/\alpha+5}
\]
for $\epsilon>0$ sufficiently small.
We then evaluate \eqref{e:v2I} at $t=T_4$ and $x=x_2$ to show that 
\begin{align*}
|(\partial_x^2X)&(T_4;x_2)|\\
=&|v_1^{-1}(T_4;x_2)||u_0''(x_2)-I_2(T_4;x_2)-v_2(T_4;x_2)(\partial_xX)(T_4;x_2)^2| \\
<&-\frac32\frac{1}{m(0)}q(T_4)(2^{1/\alpha+1}b+\epsilon C_2bq^{1-\sigma+8\epsilon}(T_4)
+4\cdot2^{1/\alpha+1}C_2bq^{-1-\sigma}(T_4)q^{2-2\epsilon}(T_4)) \\
\leq&-\frac32(5\cdot2^{1/\alpha+1}+\epsilon)\frac{1}{m(0)}C_2bq^{2-\sigma-2\epsilon}(T_4)\\
<&-\frac{2^{1/\alpha+4}}{m(0)}C_2b^{1/\alpha}q^{2-\sigma-2\epsilon}(T_4).
\end{align*}
Therefore \eqref{claim:X2} holds throughout the interval $[0,T_2]$. Here the first inequality uses \eqref{I:2/3}, \eqref{def:q} and \eqref{A:un}, \eqref{I:I2}, \eqref{I:vn}, \eqref{I:X1}, the second inequality uses \eqref{def:C} and \eqref{I:sigma}, Lemma~\ref{lem:q}, and the last inequality follows for $\epsilon>0$ sufficiently small. 

\subsection*{Proof of \eqref{claim:X3}}Similarly, recall \eqref{def:I3} and we calculate that
\begin{align}
|I_3(t;x_2)|<&\frac{40}{\pi}(1+(2e)^{1/\alpha}b)\frac{C_2}{\epsilon^2}\notag \\
&\times\int^t_0\Big(8\cdot3^{2/\alpha+2}b^2
q^{-1-\sigma\alpha-2\sigma}(\tau)q^{3-3\epsilon}(\tau) \notag\\ 
&\hspace*{30pt} -6\cdot2^{2/\alpha+6}\frac{C_2}{m(0)}b^2
q^{-1-\sigma\alpha-\sigma}(\tau)q^{1-\epsilon}(\tau)q^{2-\sigma-2\epsilon}(\tau) \notag \\
&\hspace*{120pt}+\frac{\epsilon}{m^2(0)}C_2^2b^2
q^{-1-\sigma\alpha}(\tau)q^{3-2\sigma+7\epsilon}(\tau)\Big)~d\tau \notag\\
\leq&\frac{40}{\pi}(1+(2e)^{1/\alpha}b) \notag \\
&\times\Big(2^33^{2/\alpha+2}
+3\cdot2^{2/\alpha+7}\frac{C_2}{m(0)}+\frac{\epsilon C_2^2}{m^2(0)}\Big)
\frac{C_2}{\epsilon^2}b^2\int^t_0q^{1-2\sigma+7\epsilon}(\tau)~d\tau \notag\\
\leq&\frac{40}{\pi}(1+(2e)^{1/\alpha}b)
\Big(2^33^{2/\alpha+2}+\frac{3\cdot2^{2/\alpha+7}}{(-m(0))^{1/4}}+\frac{\epsilon}{(-m(0))^{1/2}}\Big)\notag \\
&\times \frac{1}{2\sigma-2-7\epsilon}\frac{1}{(1-\epsilon)^{2\sigma-7\epsilon}}\frac{1}{m(0)}
\frac{C_2}{\epsilon^2}b^2(q^{2-2\sigma+7\epsilon}(t)-(1-\epsilon)^{2\sigma-2-7\epsilon}) \notag \\
<&-\frac{\epsilon^2}{m(0)}C_2^2b^2q^{2-2\sigma+7\epsilon}(t) \label{I:I3}
\end{align}
for all $t\in[0,T_4]$. Here the first inequality uses \eqref{I:Kn}, \eqref{I:X1}, \eqref{I:X2} and \eqref{I:X3}, and the second inequality uses \eqref{I:sigma}. Indeed, 
\[
2-\sigma\alpha-2\sigma-3\epsilon>1-2\sigma+7\epsilon.
\] 
The third inequality uses \eqref{I:s>1}, and the last inequality uses \eqref{A:m1} and \eqref{A:m3}. Indeed,
\begin{align*}
\epsilon^4(1-\epsilon)^{2\sigma-7\epsilon}(-m(0))^{3/4}>&
\frac{1}{2\sigma-2-7\epsilon}\frac{40}{\pi}(1+(2e)^{1/\alpha}b) 
(2^3\cdot 3^{2/\alpha+2}+3\cdot2^{2/\alpha+7}+\epsilon)
\end{align*}
for $\epsilon>0$ sufficiently small.
We then integrate \eqref{e:X3'} over the the interval $[0,T_4]$ to show that 
\begin{align*}
|(\partial_x^3X)(T_4;x_2)|\leq&\int^{T_4}_0(|u_0'''(x_2)|+|I_3(t;x_2)|)~dt \\
<&\int^{T_4}_0\Big(3^{2/\alpha+1}b^2
-\frac{\epsilon^2}{m(0)}C_2^2b^2q^{2-2\sigma+7\epsilon}(t)\Big)~dt \\
\leq&\Big(\frac{3^{2/\alpha+1}}{C_2^2}-\frac{\epsilon^2}{m(0)}\Big)
\frac{1}{2\sigma-3-7\epsilon}\frac{1}{(1-\epsilon)^{2\sigma-1-7\epsilon}}\frac{1}{m(0)} \\
&\hspace*{80pt}\times C_2^2b^2
(q^{3-2\sigma+7\epsilon}(T_4)-(1-\epsilon)^{2\sigma-3-7\epsilon})\\
<&\frac{\epsilon}{m^2(0)}C_2^2b^2q^{3-2\sigma+7\epsilon}(T_4).
\end{align*}
Therefore \eqref{claim:X3} holds throughout the interval $[0,T_2]$. Here the second inequality uses \eqref{A:un} and \eqref{I:I3}, the third inequality uses \eqref{I:s>1}, and the last inequality uses \eqref{I:sigma} and \eqref{A:m2}. Indeed,
\[
5\epsilon^2(1-\epsilon)^{2\sigma-1-7\epsilon}-1)(3^{2/\alpha+1}(-m(0))^{1/2}+\epsilon^2)>1
\]
for $\epsilon>0$ sufficiently small. This proves \eqref{claim:X2} and \eqref{claim:X3}.

\

Returning to the proof of \eqref{claim:vn} for $n=2$, we recall that $v_2(T_2;x_2)=\max_{x\in\mathbb{R}}|v_2(T_2;x)|$ and $x_2\in\Sigma_{1/3}(T_2)$. Differentiating $v_2$ and evaluating at $t=T_2$, $x=x_2$, we use \eqref{def:vh_n} to find that 
\[
v_3(T_2;x_2)(\partial_xX)(T_2;x_2)=0.
\] 
Let's multiply \eqref{e:v2I} by $3v_2(\partial_xX)$ and \eqref{e:v3I} by $v_1$ and take their difference. Evaluating the result at $t=T_2$ and $x=x_2$, we show that 
\begin{align*}
v_2^2(T_2;x_2)=&\frac13(\partial_xX)^{-3}(T_2;x_2)
(v_1^2(T_2;x_2)(\partial_x^3X)(T_2;x_2) \\
&\hspace*{80pt}+3v_2(T_2;x_2)(\partial_xX)(T_2;x_2)(u_0''(x_2)-I_2(T_2;x_2)) \\
&\hspace*{80pt}-v_1(T_2;x_2)(u_0'''(x_2)-I_3(T_2;x_2))) \\
<&\frac83q^{-3-6\epsilon}(T_2)
\Big(m^2(0)\frac{\epsilon}{m^2(0)}C_2^2b^2q^{-2}(T_2)q^{3-2\sigma+7\epsilon}(T_2) \\
&\hspace*{60pt}+62^{1/\alpha+1}C_2bq^{-1-\sigma}(T_2)q^{1-\epsilon}(T_2)
(2^{1/\alpha+1}b+\epsilon C_2bq^{1-\sigma+8\epsilon}(T_2)) \\
&\hspace*{60pt}-m(0)q^{-1}(T_2)
\Big(3^{2/\alpha+1}b^2-\frac{\epsilon^2}{m(0)}C_2^2b^{2/\alpha}q^{2-2\sigma+7\epsilon}(T_2)\Big)\Big) \\
<&\frac83\Big(\epsilon+6\cdot2^{1/\alpha+1}\Big(\frac{2^{1/\alpha+1}}{C_2}+\epsilon\Big)
-\frac{3^{2/\alpha+1}}{m(0)}+\epsilon^2\Big)C_2^2b^2q^{-2-2\sigma-\epsilon}(T_2)\\
<&C_2^22^{2/\alpha+2}b^2q^{-2-2\sigma}(T_2).
\end{align*}
Therefore \eqref{claim:vn} holds for $n=2$ throughout the interval $[0,T_2]$. Here the first inequality uses \eqref{I:X1}, \eqref{I:2/3}, \eqref{def:q}, \eqref{I:X3}, \eqref{I:vn} and \eqref{A:un}, \eqref{I:I2}, \eqref{I:I3}, the second inequality uses \eqref{def:C} and \eqref{I:sigma}, Lemma~\ref{lem:q}, and the last inequality uses that
\[
\epsilon+6\cdot 2^{1/\alpha+1}(2^{1/\alpha+1}\epsilon^{3/4}+\epsilon)+3^{2/\alpha+1}\epsilon+\epsilon^2<3\cdot 2^{2/\alpha-1}
\]
for $\epsilon>0$ sufficiently small.

\subsection*{Proof of \eqref{claim:v0}-\eqref{claim:vn}, \eqref{claim:h0}-\eqref{claim:hn}}
To summarize, a contradiction proves that \eqref{claim:v0}-\eqref{claim:vn} and \eqref{claim:h0}-\eqref{claim:hn} hold for all $n=0,1,2,\dots$ throughout the interval $[0,T_1]$. 

\subsection*{Proof of \eqref{claim:A}}
Note from \eqref{I:Kn}, \eqref{def:q} and \eqref{A:m3} that 
\[
|K_1(t;x)|\leq \frac{40}{\pi}(1+(2e)^{1/\alpha}b)\frac{C_2}{\epsilon^2}m^{-2}(0)m^2(t)<\epsilon^2m^2(t)
\]
for all $t\in [0,T_1]$ for all $x\in\mathbb{R}$. Indeed,
\[
\epsilon^4(-m(0))^{5/4}>\frac{40}{\pi}(1+(2e)^{1/\alpha}b)
\]
for $\epsilon>0$ sufficiently small. A contradiction therefore proves \eqref{claim:A}. We merely pause to remark that \eqref{claim:v0}-\eqref{claim:vn} and \eqref{claim:h0}-\eqref{claim:hn} hold for all $n=0,1,2,\dots$ throughout the interval $[0, T']$ for all $T'<T$.

\subsection*{Proof of Theorem~\ref{thm:main}}
For $t\in [0,T)$, let $x \in \Sigma_\epsilon(t)$. Note from \eqref{def:r} and \eqref{I:dr/dt} that 
\[
m(0)(v_1^{-1}(0;x)+(1+\epsilon) t) \leq r(t;x)\leq m(0)(v_1^{-1}(0;x)+(1-\epsilon) t).
\]
Moreover, note from Lemma~\ref{lem:S} that 
\[
m(0)<v_1(0;x)\leq (1-\epsilon)m(0).
\] 
Consequently,
\[
1+m(0)(1+\epsilon)t \leq r(t) \leq \frac{1}{1-\epsilon} + m(0) (1-\epsilon)t.
\]
Furthermore, \eqref{I:qr} implies that 
\[
(1-\epsilon) + m(0)(1-\epsilon^2)t \leq q(t) \leq \frac{1}{1-\epsilon} + m(0) (1-\epsilon)t.
\]
Since the left side decreases to zero as ${\displaystyle t\to -\frac{1}{m(0)}\frac{1}{1+\epsilon}}$ and the right side decreases to zero as ${\displaystyle t\to -\frac{1}{m(0)}\frac{1}{(1-\epsilon)^2}}$, it follows that $q(t)\to 0$ and, hence,  $m(t)\to-\infty$ (see \eqref{def:q}) as $t\to T-$, where $T$ satisfies \eqref{E:T}. Note on the other hand that \eqref{claim:v0} dictates that $v_0(t;x)$ remains bounded for all $t\in [0,T']$, $T'<T$, for all $x\in \mathbb{R}$. That is, $\inf_{x\in\mathbb{R}}\partial_xu(x,t)\to -\infty$ as $t\to T-$ but $u(x,t)$ is bounded for all $x\in\mathbb{R}$ for all $t\in [0,T)$, namely wave breaking. This completes the proof.

\section*{Acknowledgment}
VMH is supported by the National Science Foundation grant CAREER DMS-1352597, an Alfred P. Sloan Foundation fellowship, and a Beckman fellowship at the Center for Advanced Study at the University of Illinois at Urbana-Champaign. The authors thank anonymous referees for their careful reading of the manuscript and many helpful suggestions.

\begin{appendix}

\section{Assorted proofs of lemmas}\label{sec:appendix}

\begin{proof}[Proof of Lemma~\ref{lem:Kn}]
We split the integral and perform an integration by parts to show that 
\begin{align}
|K_n&(t;x)| \notag \\=&\Big|-\frac12\Big(\int_{|y|<\delta}+\int_{|y|>\delta}\Big)
\text{csch}(\tfrac{\pi}{2}y)((\partial_x^n\eta)(X(t;x),t)-(\partial_x^n\eta)(X(t;x)-y,t))~dy\Big|\hspace*{-25pt}\notag \\
\leq&\Big|\frac1\pi\log|\tanh(\tfrac\pi4\delta)|
((\partial_x^n\eta)(X(t;x)-\delta,t)-(\partial_x^n\eta)(X(t;x)+\delta,t))\Big|\notag \\
&+\Big|\frac1\pi\int_{|y|<\delta}\log|\tanh(\tfrac\pi4y)|(\partial_x^{n+1}\eta)(X(t;x)-y,t)~dy\Big|\notag\\
&+\Big|\frac12\int_{|y|>\delta}\text{csch}(\tfrac\pi2y)
((\partial_x^n\eta)(X(t;x),t)-(\partial_x^n\eta)(X(t;x)-y,t))~dy\Big|\notag\\
\leq&\frac6\pi\log(\coth(\tfrac\pi4\delta))\|\zeta_n(t)\|_{L^\infty}
+\frac2\pi\Big(\int_0^\delta \log(\coth(\tfrac\pi4y))~dy\Big)\|\zeta_{n+1}(t)\|_{L^\infty}.\label{I:K}
\end{align}
Indeed, $(\frac1\pi\log(\tanh(\frac\pi4y)))'=\frac12\text{csch}(\frac\pi2y)$. In other words, the kernel associated with the integral representation of $\mathcal{M}\partial_x$ is singular of a logarithmic order near zero and it decays exponentially at infinity.

Let $g(y)=\frac1\pi\log(\coth(\tfrac\pi4y))$. A direct calculation reveals that its inverse function is 
\[
g^{-1}(y)=\frac2\pi\log(\coth(\tfrac\pi4y))=2g(2y).
\]
Note that 
\[
\int^\delta_0g(y)~dy=\delta g(\delta)+\int^\infty_{g(\delta)}g^{-1}(y)~dy
=\delta g(\delta)+\int^\infty_{g(\delta)}2g(2y)~dy,
\]
and we calculate that
\begin{align*}
\int^\infty_{g(\delta)}2g(2y)~dy=\int^\infty_{2g(\delta)}g(z)~dz
=&\frac1\pi\int^\infty_{2g(\delta)}\log\Big(1+\frac{2}{e^{\pi z/2}-1}\Big)~dz\\
<&\frac2\pi\int^\infty_{2g(\delta)}\frac{e^{\pi z/2}}{e^{\pi z/2}-1}e^{-\pi z/2}~dz \\
<&\frac2\pi\frac{e^{\pi g(\delta)}}{e^{\pi g(\delta)}-1}\int^\infty_{2g(\delta)}e^{-\pi z/2}~dz \\
<&\frac6\pi\frac2\pi e^{-\pi g(\delta)}=\frac{12}{\pi^2}\tanh(\tfrac\pi4\delta).
\end{align*}
Here the first inequality uses that $\log(1+x)<x$ for all $x>0$, the second inequality uses that ${\displaystyle \frac{e^x}{e^x-1}}$ is decreasing for all $x>0$, and the third inequality uses that ${\displaystyle \frac{\coth(\tfrac\pi4\delta)}{\coth(\tfrac\pi4\delta)-1}<3}$ for $0<\delta<1$, by direct calculations. Consequently,
\begin{align}
\int^\delta_0g(y)~dy<&
\frac1\pi\delta\log(\coth(\tfrac\pi4\delta))+\frac{12}{\pi^2}\tanh(\tfrac\pi4\delta)\notag\\
<&\frac1\pi\delta\log(\coth(\tfrac\pi4\delta))+\frac9\pi\delta\log(\coth(\tfrac\pi4\delta)).\label{I:2'}
\end{align}
Indeed, a direct calculation reveals that $\tanh x<3 x\log(\coth x))$ for all $0<x<1$. 

Substituting \eqref{I:2'} into \eqref{I:K}, we then show that
\begin{align*}
|K_n(t;x)|<& \frac6\pi\log(\coth(\tfrac\pi4\delta))\|\zeta_n(t)\|_{L^\infty}
+\frac{20}{\pi}\delta\log(\coth(\tfrac\pi4\delta))\|\zeta_{n+1}(t)\|_{L^\infty} \\
<&\frac{20}{\pi}(\log(\tfrac\pi8\delta)^{-1}\|\zeta_n(t)\|_{L^\infty}
+\delta\log(\tfrac\pi8\delta)^{-1}\|\zeta_{n+1}(t)\|_{L^\infty}) \\
<&\frac{20}{\pi}\frac1\epsilon\Big(\frac8\pi\Big)^\epsilon
(\delta^\epsilon\|\zeta_n(t)\|_{L^\infty}+\delta^{1-\epsilon}\|\zeta_{n+1}(t)\|_{L^\infty})
\end{align*}
for all $0<\epsilon<1$. Therefore \eqref{e:Kn} holds for $\epsilon>0$ sufficiently small. Here the second inequality uses that $\coth x<2x^{-1}$ for all $0<x<1$, and the last inequality uses that $\log(x^{-1})<\frac1\epsilon x^{-\epsilon}$ throughout $0<x<1$ for all $0<\epsilon<1$, by direct calculations. This completes the proof.
\end{proof}

\begin{proof}[Proof of Lemma~\ref{lem:S}]
Suppose on the contrary that $x_1\notin\Sigma_{\gamma}(t_1)$ but $x_1\in\Sigma_{\gamma}(t_2)$ for some $x_1\in\mathbb{R}$ for some $0\leq t_1\leq t_2\leq T_1$. That is,
\begin{equation}\label{def:x1}
v_1(t_1;x_1)>(1-\gamma)m(t_1)\quad\text{and}\quad
v_1(t_2;x_1)\leq(1-\gamma)m(t_2)<\frac12m(t_2).
\end{equation}
Since $v_1(\cdot\,;x_1)$ and $m$ are uniformly continuous throughout the interval $[0,T_1]$, we may choose $t_1$ and $t_2$ close so that 
\[
v_1(t;x_1)\leq\frac12m(t)\qquad\text{for all $t\in[t_1,t_2]$.}
\]
Let 
\begin{equation}\label{def:x2}
v_1(t_1;x_2)=m(t_1)<\frac12m(t_1).
\end{equation}
We may choose $t_2$ closer to $t_1$, if necessary, so that 
\[
v_1(t;x_2)\leq\frac12m(t)\qquad \text{for all $t\in[t_1,t_2]$.}
\]
For $\epsilon>0$ sufficiently small, \eqref{I:A} then leads to that 
\[
|K_1(t;x_j)|\leq \epsilon^2m^2(t)\leq 4\epsilon^2v_1^2(t;x_j)<\frac\gamma2 v_1^2(t;x_j)
\qquad \text{for all $t\in[t_1,t_2]$ and $j=1,2$.}
\]
Note from \eqref{e:vn}, where $n=1$, that 
\[
\frac{dv_1}{dt}(\cdot\,;x_1)=-v_1^2(\cdot\,;x_1)-K_1(\cdot\,;x_1)
\geq\Big(-1-\frac{\gamma}{2}\Big)v_1^2(\cdot\,;x_1)
\]
and
\[
\frac{dv_1}{dt}(\cdot\,;x_2)\leq\Big(-1+\frac{\gamma}{2}\Big)v_1^2(\cdot\,;x_2).
\]
We then integrate them over the interval $(t_1,t_2)$ to show that
\[
\hspace*{-15pt}v_1(t_2;x_1)\geq\frac{v_1(t_1;x_1)}{1+(1+\frac{\gamma}{2})v_1(t_1;x_1)(t_2-t_1)}
\quad\text{and}\quad
v_1(t_2;x_2)\leq\frac{v_1(t_1;x_2)}{1+(1-\frac{\gamma}{2})v_1(t_1;x_2)(t_2-t_1)}.
\]
The latter inequality and \eqref{def:x2} imply that 
\[
m(t_2)\leq\frac{m(t_1)}{1+(1-\frac{\gamma}{2})m(t_1)(t_2-t_1)}.
\]
The former inequality and \eqref{def:x1}, on the other hand, imply that
\begin{align*}
v_1(t_2;x_1)>&\frac{(1-\gamma)m(t_1)}{1+(1+\frac{\gamma}{2})(1-\gamma)m(t_1)(t_2-t_1)} \\
>&\frac{(1-\gamma)m(t_1)}{1+(1-\frac{\gamma}{2})m(t_1)(t_2-t_1)} \\
\geq& (1-\gamma)m(t_2).
\end{align*}
A contradiction therefore completes the proof.
\end{proof}

\begin{proof}[Proof of Lemma~\ref{lem:Stirling}]
We use Stirling's inequality and calculate that 
\begin{align*}
\sum_{j=2}^n\left(\begin{matrix} n\\j\end{matrix}\right)&j^{(j-1)/\alpha+1}(n+1-j)^{(n+1-j)/\alpha} \\
\leq&\sum_{j=2}^n\frac{n^n}{j^j(n-j)^{n-j}}j^{(j-1)/\alpha+1}(n+1-j)^{(n+1-j)/\alpha} \\
=&n^{n/\alpha+1}\sum_{j=2}^n\Big(\frac{n+1-j}{n-j}\Big)^{n-j}\frac{n+1-j}{n}
\Big(\frac{j^{j-1}(n+1-j)^{(n+1-j)}}{n^n}\Big)^{1/\alpha-1} \\
\leq&2en^{n/\alpha+1}\sum_{j=2}^{[n/2]}\Big(\frac12\Big)^{(j-1)(1/\alpha-1)}.
\end{align*}
Therefore \eqref{sum:h} follows from \eqref{I:sigma}. In the first inequality we assume the convention $0^0=1$, and in the last inequality $[a]$ denotes the greatest integer not exceeding $a\in\mathbb{R}$. 
\end{proof}

\end{appendix}

\bibliographystyle{amsalpha}
\bibliography{breakingBib}

\providecommand{\bysame}{\leavevmode\hbox to3em{\hrulefill}\thinspace}
\providecommand{\MR}{\relax\ifhmode\unskip\space\fi MR }
\providecommand{\MRhref}[2]{%
  \href{http://www.ams.org/mathscinet-getitem?mr=#1}{#2}
}
\providecommand{\href}[2]{#2}
\begin{thebibliography}{HP16b}

\bibitem[Ami84]{Amick}
Charles~J. Amick, \emph{Regularity and uniqueness of solutions to the
  {B}oussinesq system of equations}, J. Differential Equations \textbf{54}
  (1984), no.~2, 231--247. \MR{757294 (86a:35120)}

\bibitem[BF67]{BF}
T.~B. Benjamin and J.~E. Feir, \emph{The disintegration of wave trains on deep
  water. {P}art 1. {T}heory}, J. Fluid Mech. \textbf{27} (1967), no.~3,
  417--437.

\bibitem[Bou77]{Bnesq1877}
Joseph Boussinesq, \emph{Essai sur la {T}h\'eorie des {E}aux {C}ourantes},
  vol.~23, M\'emoires pr\'esent\'es par diver\'s savants \'a l'{A}cad\'emie des
  {S}ciences l'{I}nstitut de {F}rance (s\'erie 2), no.~1, Paris, Imprimerie
  Nationale, 1877.

\bibitem[CE98]{CE98}
Adrian Constantin and Joachim Escher, \emph{Wave breaking for nonlinear
  nonlocal shallow water equations}, Acta Math. \textbf{181} (1998), no.~2,
  229--243. \MR{1668586 (2000b:35206)}

\bibitem[DO11]{DO}
Bernard Deconinck and Katie Oliveras, \emph{The instability of periodic surface
  gravity waves}, J. Fluid Mech. \textbf{675} (2011), 141--167. \MR{2801039
  (2012j:76057)}

\bibitem[Dob87]{Dobrokhotov}
S.~Yu. Dobrokhotov, \emph{Nonlocal analogues of the nonlinear {B}oussinesq
  equation for surface waves over an uneven bottom and their asymptotic
  solutions}, Dokl. Akad. Nauk SSSR \textbf{292} (1987), no.~1, 63--67.
  \MR{871954 (88a:35202)}

\bibitem[HJ15]{HJ2}
Vera~Mikyoung Hur and Mathew~A. Johnson, \emph{Modulational instability in the
  {W}hitham equation for water waves}, Stud. Appl. Math. \textbf{134} (2015),
  no.~1, 120--143. \MR{3298879}

\bibitem[HP16a]{HP2}
Vera~Mikyoung Hur and Ashish~Kumar Pandey, \emph{Modulational instability in a
  full-dispersion shallow water model}, arxiv:1608.04685 (2016).

\bibitem[HP16b]{HP1}
Vera~Mikyoung Hur and Ashish~Kumar Pandey, \emph{Modulational instability in
  nonlinear nonlocal equations of regularized long wave type}, Physica D:
  Nonlinear Phenomena \textbf{325} (2016), 98 -- 112.

\bibitem[HT14]{HT1}
Vera~Mikyoung Hur and Lizheng Tao, \emph{Wave breaking for the {W}hitham
  equation with fractional dispersion}, Nonlinearity \textbf{27} (2014),
  no.~12, 2937--2949. \MR{3291137}

\bibitem[Hur15]{Hur-breaking}
Vera~Mikyoung Hur, \emph{Breaking in the {W}hitham equation for shallow water
  waves}, arxiv:1506.04075 (2015).

\bibitem[Kat83]{Kato}
Tosio Kato, \emph{On the {C}auchy problem for the (generalized) {K}orteweg-de
  {V}ries equation}, Studies in applied mathematics, Adv. Math. Suppl. Stud.,
  vol.~8, Academic Press, New York, 1983, pp.~93--128. \MR{759907 (86f:35160)}

\bibitem[Lan13]{Lannes}
David Lannes, \emph{The water waves problem}, Mathematical Surveys and
  Monographs, vol. 188, American Mathematical Society, Providence, RI, 2013,
  Mathematical analysis and asymptotics. \MR{3060183}

\bibitem[NS94]{NS94}
P.~I. Naumkin and I.~A. Shishmar{\"e}v, \emph{Nonlinear nonlocal equations in
  the theory of waves}, Translations of Mathematical Monographs, vol. 133,
  American Mathematical Society, Providence, RI, 1994, Translated from the
  Russian manuscript by Boris Gommerstadt. \MR{1261868 (94m:35230)}

\bibitem[Sau15]{Saut}
Jean-Claude Saut, \emph{Private communications}, 2015.

\bibitem[Sch81]{Schonbek}
Maria~Elena Schonbek, \emph{Existence of solutions for the {B}oussinesq system
  of equations}, J. Differential Equations \textbf{42} (1981), no.~3, 325--352.
  \MR{639225 (83b:35151)}

\bibitem[Sel68]{Seliger}
R.~L. Seliger, \emph{A note on the breaking of waves}, Proc. R. Soc. Lond. Ser.
  A Math. Phys. Eng. Sci (1968), 493--496.

\bibitem[Whi67]{Whitham1967}
G.~B. Whitham, \emph{Non-linear dispersion of water waves}, J. Fluid Mech.
  \textbf{27} (1967), 399--412. \MR{0208903 (34 \#8711)}

\bibitem[Whi74]{Whitham}
Gerald~B. Whitham, \emph{Linear and nonlinear waves}, Wiley-Interscience [John
  Wiley \& Sons], New York-London-Sydney, 1974, Pure and Applied Mathematics.
  \MR{0483954 (58 \#3905)}

\bibitem[Yos82]{Yosihara}
Hideaki Yosihara, \emph{Gravity waves on the free surface of an incompressible
  perfect fluid of finite depth}, Publ. Res. Inst. Math. Sci. \textbf{18}
  (1982), no.~1, 49--96. \MR{660822 (83k:76017)}

\end{thebibliography}

\end{document}